
\documentclass[12pt,dvips]{amsart}
\usepackage{amsfonts, amssymb, latexsym, epsfig, enumerate}

\setlength{\oddsidemargin}{0in}
\setlength{\evensidemargin}{0in}
\setlength{\marginparwidth}{0in}
\setlength{\marginparsep}{0in}
\setlength{\marginparpush}{0in}
\setlength{\topmargin}{0in}
\setlength{\headheight}{0pt}
\setlength{\headsep}{0pt}
\setlength{\footskip}{.3in}
\setlength{\textheight}{9.2in}
\setlength{\textwidth}{6.5in}
\setlength{\parskip}{4pt}

\newtheorem{Theorem}{Theorem}[section]
\newtheorem{Proposition}[Theorem]{Proposition}
\newtheorem{Lemma}[Theorem]{Lemma}
\newtheorem{Claim}[Theorem]{Claim}

\newtheorem{Corollary}[Theorem]{Corollary}

\newtheorem{Definition-Proposition}[Theorem]{Definition-Theorem}
\newtheorem{Main Conjecture}[Theorem]{Main Conjecture}

\newtheorem{Remark}[Theorem]{Remark}
\theoremstyle{remark}
\newtheorem{Example}[Theorem]{Example}

\renewcommand{\theenumi}{\roman{enumi}}

\newcommand{\excise}[1]{}

\newcommand{\itop}{\tau}  



\newcommand{\Gr}{\mathrm{Gr}}
\newcommand{\CC}{\mathbb{C}}
\newcommand{\ZZ}{\mathbb{Z}}
\newcommand{\define}{\textbf}

\newcommand{\eqssyt}{\mathtt{EqSSYT}}
\newcommand{\dist}{\mathrm{dist}}
\newcommand{\xbox}{\mathtt{x}}
\newcommand{\factor}{\mathtt{apfactor}}
\newcommand{\wt}{\mathtt{apwt}}

\theoremstyle{plain}






\newcommand{\cellsize}{19}
\newlength{\cellsz} \setlength{\cellsz}{\cellsize\unitlength}
\newsavebox{\cell}
\sbox{\cell}{\begin{picture}(\cellsize,\cellsize)
\put(0,0){\line(1,0){\cellsize}}
\put(0,0){\line(0,1){\cellsize}}
\put(\cellsize,0){\line(0,1){\cellsize}}
\put(0,\cellsize){\line(1,0){\cellsize}}
\end{picture}}
\newcommand\cellify[1]{\def\thearg{#1}\def\nothing{}%
\ifx\thearg\nothing
\vrule width0pt height\cellsz depth0pt\else
\hbox to 0pt{\usebox{\cell} \hss}\fi%
\vbox to \cellsz{
\vss
\hbox to \cellsz{\hss$#1$\hss}
\vss}}
\newcommand\tableau[1]{\vtop{\let\\\cr
\baselineskip -16000pt \lineskiplimit 16000pt \lineskip 0pt
\ialign{&\cellify{##}\cr#1\crcr}}}
%

\newcommand{\kellsize}{12}
\newlength{\kellsz} \setlength{\kellsz}{\kellsize\unitlength}
\newsavebox{\kell}
\sbox{\kell}{\begin{picture}(\kellsize,\kellsize)
\put(0,0){\line(1,0){\kellsize}}
\put(0,0){\line(0,1){\kellsize}}
\put(\kellsize,0){\line(0,1){\kellsize}}
\put(0,\kellsize){\line(1,0){\kellsize}}
\end{picture}}
\newcommand\kellify[1]{\def\thearg{#1}\def\nothing{}%
\ifx\thearg\nothing
\vrule width0pt height\kellsz depth0pt\else
\hbox to 0pt{\usebox{\kell} \hss}\fi%
\vbox to \kellsz{
\vss
\hbox to \kellsz{\hss$#1$\hss}
\vss}}
\newcommand\ktableau[1]{\vtop{\let\\\cr
\baselineskip -16000pt \lineskiplimit 16000pt \lineskip 0pt
\ialign{&\kellify{##}\cr#1\crcr}}}

\newcommand{\sellsize}{36}
\newlength{\sellsz} \setlength{\sellsz}{\sellsize\unitlength}
\newsavebox{\sell}
\sbox{\sell}{\begin{picture}(\sellsize,20)
\put(0,0){\line(1,0){\sellsize}}
\put(0,0){\line(0,1){\sellsize}}
\put(\sellsize,0){\line(0,1){\sellsize}}
\put(0,\sellsize){\line(1,0){\sellsize}}
\end{picture}}
\newcommand\sellify[1]{\def\thearg{#1}\def\nothing{}%
\ifx\thearg\nothing
\vrule width0pt height\sellsz depth0pt\else
\hbox to 0pt{\usebox{\sell} \hss}\fi%
\vbox to \sellsz{
\vss
\hbox to \sellsz{\hss$#1$\hss}
\vss}}
\newcommand\stableau[1]{\vtop{\let\\\cr
\baselineskip -16000pt \lineskiplimit 16000pt \lineskip 0pt
\ialign{&\sellify{##}\cr#1\crcr}}}

\begin{document}
\pagestyle{plain}

\mbox{}
\title[Eigenvalues and equivariant cohomology]{Eigenvalues of Hermitian matrices and\\ equivariant cohomology of Grassmannians}

\author{David Anderson}
\address{FSMP--Institut de Math\'ematiques de Jussieu, 75013
Paris, France}
\email{andersond@math.jussieu.fr}

\author{Edward Richmond}
\address{Department of Mathematics, University of British Columbia,
Vancouver, BC, V6T 1Z2, Canada }
\email{erichmond@math.ubc.ca}

\author{Alexander Yong}
\address{Department of Mathematics, University of Illinois at
Urbana-Champaign, Urbana, IL 61801, USA}

\email{ayong@uiuc.edu}

\date{April 2, 2013}

\begin{abstract}
The saturation theorem of [Knutson-Tao '99]
concerns the nonvanishing of
 Littlewood-Richardson coefficients. In combination with
 work of [Klyachko '98], it implies [Horn '62]'s conjecture about eigenvalues
 of sums of Hermitian matrices.
   This eigenvalue problem has a generalization [Friedland '00] to \emph{majorized} sums of Hermitian matrices.

  We further illustrate the common features between these two eigenvalue problems and their connection to Schubert calculus of Grassmannians. Our main result gives a Schubert
  calculus interpretation of Friedland's problem, via \emph{equivariant} cohomology of Grassmannians.
  In particular, we prove a saturation theorem for this setting.
  Our arguments employ the aformentioned work together with [Thomas-Yong '12].
\end{abstract}

\maketitle
\section{Introduction and the main results}

\subsection{Eigenvalue problems of  A.~Horn and of S.~Friedland}
The eigenvalue problem for Hermitian matrices asks how imposing the
condition $A+B=C$ on three $r\times r$ Hermitian matrices constrains their eigenvalues $\lambda$,
$\mu$, and $\nu$, written as weakly decreasing vectors of real numbers.
This problem was considered in the 19th century, and has reappeared in various guises since.  A general survey is given in \cite{Fultona}; here we mention a few highlights of the story.  Building on observations of H.~Weyl, K.~Fan, and others, A.~Horn recursively defined a list of inequalities on triples $(\lambda,\mu,\nu) \in {\mathbb R}^{3r}$, and conjectured that these give a complete solution to the eigenvalue problem \cite{Horn}.  The fact that these inequalities (or others that turn out to be equivalent) are necessary
has been proved by several authors, including B.~Totaro \cite{Totaro} and A.~Klyachko \cite{Klyachko}.  Klyachko also established that his list of inequalities is sufficient, giving the first solution to the eigenvalue problem.

In fact, he showed more: the same inequalities give an asymptotic solution to the problem of which Littlewood-Richardson coefficients $c_{\lambda,\mu}^{\nu}$ are nonzero.  More precisely, suppose $\lambda,\mu,\nu$ are partitions with at most $r$ parts.  Klyachko showed that if $c_{\lambda,\mu}^{\nu}\neq 0$, then
$(\lambda,\mu,\nu)\in {\mathbb Z}_{\geq 0}^{3r}$ satisfies his inequalities; conversely, if
$(\lambda,\mu,\nu)\in {\mathbb Z}_{\geq 0}^{3r}$ satisfy his inequalities then
$c_{N\lambda,N\mu}^{N\nu}\neq 0$ for some $N\in {\mathbb N}$.  (Here, $N\lambda$ is the partition with each part of $\lambda$ stretched by a factor of $N$.)
Sharpening this last statement, A.~Knutson-T.~Tao \cite{Knutson.Tao:99} established the
\emph{saturation theorem}: $c_{\lambda,\mu}^{\nu}\neq 0$ if and only if
$c_{N\lambda,N\mu}^{N\nu}\neq 0$.  Combined with \cite{Klyachko}, it follows that Klyachko's
solution agrees with Horn's conjectured solution.

The Littlewood-Richardson coefficients are structure constants for multiplication of Schur polynomials.
Therefore, they can be alternatively interpreted as tensor product
multiplicities in the representation theory of $GL_n$, or as intersection multiplicities in the Schubert calculus of Grassmannians.  Indeed, \cite{Knutson.Tao:99} adopts the former viewpoint,
providing conjectural extensions to other Lie groups. Subsequent work includes \cite{Kapovich.Millson, BK:10, Kumar:ICM, Ressayre, Sam}; see also the references therein.

The main goal of this paper is to provide further evidence of the naturality of the connection of Horn's problem to Schubert calculus.  We demonstrate how the connection persists for the following extension of this eigenvalue problem.  Recall that a Hermitian matrix $M$ \define{majorizes} another Hermitian matrix $M'$ if $M-M'$ is positive semidefinite (its eigenvalues are all nonnegative).  In this case, we write $M\geq M'$.  S.~Friedland \cite{Friedland} considered the following question:
\begin{equation}\nonumber
\mbox{\emph{Which eigenvalues $(\lambda,\mu,\nu)$ can occur if $A+B \geq C$?}}
\end{equation}
His solution is in terms of linear inequalities, which includes Klyachko's inequalities, a trace
inequality and some additional inequalities.  Later, W.~Fulton \cite{Fulton} proved the additional
inequalities are unnecessary.  See followup work by A.~Buch \cite{Buch:06} and by C.~Chindris
\cite{Chindris} (who extends the work of H.~Derksen-J.~Weyman \cite{Derksen.Weyman}).

Our finding is that the solution to S.~Friedland's problem also
governs the \emph{equivariant} Schubert calculus of Grassmannians.  This
parallels the Horn problem's connection to classical Schubert calculus, but separates
the problem from $GL_n$-representation theory.

Let $C_{\lambda,\mu}^{\nu}$ be the equivariant Schubert structure coefficient (defined in Section~1.2).
The analogy with the earlier results is illustrated by:

\begin{Theorem}[Equivariant saturation]
\label{claim:main}
$C_{\lambda,\mu}^{\nu}\neq 0$ if and only if $C_{N\cdot\lambda,N\cdot\mu}^{N\cdot \nu}\neq 0$ for any $N\in {\mathbb N}$.
\end{Theorem}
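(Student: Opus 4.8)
The plan is to reduce the equivariant statement to the ordinary Littlewood--Richardson saturation theorem of \cite{Knutson.Tao:99}, using the equivariant jeu de taquin rule of Thomas--Yong together with positivity as the bridge. First I would record that, by Graham positivity, $C_{\lambda,\mu}^{\nu}$ is a nonnegative integer combination of monomials in the $t_i-t_j$; hence $C_{\lambda,\mu}^{\nu}\neq 0$ is equivalent to the nonemptiness of the set of combinatorial objects (equivariant, or edge-labeled, Littlewood--Richardson fillings) that the Thomas--Yong rule sums over. The crucial step is then to peel the ``equivariant part'' off such a filling: a witness for $C_{\lambda,\mu}^{\nu}\neq 0$ should restrict to an ordinary Littlewood--Richardson filling of a skew shape $\nu/\rho$ of content $\mu$, for some partition $\rho\subseteq\lambda$ with $|\rho|=|\nu|-|\mu|$, the edge labels recording how the $d:=|\lambda|+|\mu|-|\nu|$ extra cells of $\lambda/\rho$ are placed; conversely any such ordinary filling should admit an equivariant decoration. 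This yields the criterion
\begin{equation}\nonumber
 C_{\lambda,\mu}^{\nu}\neq 0 \iff \text{there is } \rho\subseteq\lambda \text{ with } |\rho|=|\nu|-|\mu| \text{ and } c_{\rho,\mu}^{\nu}\neq 0,
\end{equation}
equivalently (introducing an auxiliary partition $\sigma$ to record $\lambda/\rho$, so that $c_{\rho,\sigma}^{\lambda}\neq 0$) the nonvanishing, for some $\sigma$ with $|\sigma|=d$, of the chained coefficient $\sum_{\rho} c_{\rho,\mu}^{\nu}\,c_{\rho,\sigma}^{\lambda}$ attached to $(\lambda,\mu,\nu)$.

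With the criterion in hand the theorem becomes a saturation statement for this chained coefficient. The ``only if'' direction is the easy (semigroup) one: from $\rho\subseteq\lambda$, $|\rho|=|\nu|-|\mu|$, $c_{\rho,\mu}^{\nu}\neq 0$ one gets $N\rho\subseteq N\lambda$, $|N\rho|=|N\nu|-|N\mu|$, and $c_{N\rho,N\mu}^{N\nu}\neq 0$ (the trivial direction of \cite{Knutson.Tao:99}, or directly by scaling a hive by $N$), hence $C_{N\cdot\lambda,N\cdot\mu}^{N\cdot\nu}\neq 0$. For the ``if'' direction, $C_{N\cdot\lambda,N\cdot\mu}^{N\cdot\nu}\neq 0$ produces $\rho'\subseteq N\lambda$ with $|\rho'|=|N\nu|-|N\mu|$ and $c_{\rho',N\mu}^{N\nu}\neq 0$; since $\rho'$ need not be divisible by $N$, one cannot apply \cite{Knutson.Tao:99} to $(\rho',N\mu,N\nu)$ directly. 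Instead one works with the rational polytope whose lattice points encode a partition $\rho$ together with hives witnessing $c_{\rho,\mu}^{\nu}\neq 0$ and $c_{\rho,\sigma}^{\lambda}\neq 0$ (a pair of honeycombs glued along the shared boundary $\rho$, with $\rho$ and $\sigma$ free); this polytope scales linearly with $(\lambda,\mu,\nu)$, and its saturation --- which is the $k$-fold version of \cite{Knutson.Tao:99}, a known consequence obtained by concatenating honeycombs --- promotes the rational point furnished by $\rho'/N$ to an integral $\rho$, giving $C_{\lambda,\mu}^{\nu}\neq 0$.

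I expect the main obstacle to be the first step: extracting the clean ordinary-Littlewood--Richardson criterion from the Thomas--Yong rule, and in particular showing that the equivariant part of a witness can be supplied exactly when the underlying ordinary filling exists, so that $C_{\lambda,\mu}^{\nu}\neq 0$ is detected by the ordinary coefficients $c_{\rho,\mu}^{\nu}$ alone and is insensitive to the finer combinatorics of the edge labels. Once this is established, the remainder is the hive/honeycomb dictionary plus an appeal to (the $k$-fold form of) \cite{Knutson.Tao:99}, and both directions of the theorem follow. As a byproduct, the same criterion, combined with Klyachko's inequalities and the Friedland--Fulton solution, identifies $\{(\lambda,\mu,\nu):C_{\lambda,\mu}^{\nu}\neq 0\}$ with the solution set of Friedland's eigenvalue problem, which also reproves the theorem from the Horn-theoretic side since that solution set is a polyhedral cone.
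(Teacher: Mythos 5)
Your key structural step is essentially correct: the paper's Proposition~\ref{prop:technical} is exactly the criterion you want, namely (under the necessary containments $\lambda,\mu\subseteq\nu$) that $C_{\lambda,\mu}^{\nu}\neq 0$ if and only if there is $\rho\subseteq\lambda$ with $|\rho|=|\nu|-|\mu|$ and $c_{\rho,\mu}^{\nu}\neq 0$. But your account of how that criterion falls out of the Thomas--Yong rule is off in a way that matters. In that rule the tableau always has skew shape $\nu/\lambda$ (not $\nu/\rho$) and content $\mu$; the $p=|\lambda|+|\mu|-|\nu|$ edge labels are part of the $\mu$-content, not markers for cells of $\lambda/\rho$. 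Deleting the edge decoration therefore shrinks the \emph{content} to some $\mu^{\downarrow}\subsetneq\mu$, and getting a lattice semistandard filling after deletion is \emph{not} a restriction: removing an edge label can break latticeness, and the paper has to repair it by an iterated replacement of box labels (the procedure producing $T^{\star}$, and Lemma~\ref{l.witness}) --- that rectification is where the real combinatorial work lives. Likewise the converse, inserting an edge label to grow $\mu$ up to a target inside $\nu$, is Proposition~\ref{prop:technical}(\ref{propA}) and also requires an argument (Claims~\ref{cexists} and the latticeness claim), not just ``any such filling admits a decoration.'' So the first step of your plan is right in spirit but your proposed justification would not go through as stated; you correctly flag it as the expected obstacle, but it is harder than a restriction/decoration.

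For the second half your route genuinely diverges from the paper's. You want to finish by scaling a glued pair of honeycombs (one for $c_{\rho,\mu}^{\nu}$, one for $c_{\rho,\sigma}^{\lambda}$, glued along the free $\rho$-boundary with $\sigma$ also free) and invoking multi-fold honeycomb saturation to promote the rational point $\rho'/N$ to an integral $\rho$. This is plausible --- the glued polytope is again a honeycomb-type flow polytope and the Knutson--Tao integral-vertex argument generalizes to concatenations --- but you would need to supply or cite that carefully; ``a known consequence'' is doing a lot of work. The paper instead does something stronger: it proves the full Horn-type characterization (Theorem~\ref{thm:equivHorn}) directly, by a double induction on $p=|\lambda|+|\mu|-|\nu|$ and on $r$, using Proposition~\ref{prop:technical} to run the induction, Lemma~\ref{lem:more_ineq} to enlarge the inequality list to the sets $S_d^r$, Fulton's Proposition~\ref{prop:Fulton} to pass to $(\lambda_I,\mu_J,\nu_K)$ and $(\lambda_{I^c},\mu_{J^c},\nu_{K^c})$, and Lemma~\ref{lem:split} (itself a direct-sum-of-Hermitian-matrices argument) to reassemble. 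Saturation then falls out for free because the inequalities in (ii) are homogeneous. Your approach would yield only Theorem~\ref{claim:main}, not Theorem~\ref{thm:equivHorn}, unless supplemented (as you note at the end, one can bring in Klyachko and Friedland--Fulton afterward); the paper's induction buys the eigenvalue characterization at no extra cost and avoids any appeal to multi-fold honeycomb saturation.
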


When $|\lambda|+|\mu|=|\nu|$ then $C_{\lambda,\mu}^{\nu}=c_{\lambda,\mu}^{\nu}$. Hence
Theorem~\ref{claim:main} actually generalizes the saturation theorem.  That said, our proofs rely
on the classical Horn inequalities and so do not provide an independent proof of the earlier
results.  In addition, we use the recent combinatorial rule for
$C_{\lambda,\mu}^{\nu}$ developed by H.~Thomas and the third author
\cite{Thomas.Yong}.\footnote{The easy direction of (equivariant) saturation,
$C_{\lambda,\mu}^{\nu}\neq 0\Rightarrow C_{N\cdot\lambda,N\cdot\mu}^{N\cdot \nu}\neq 0$, can be
proved directly by using this rule (or others). However, as in the classical situation, it is the
converse that is nonobvious.}

\noindent

\subsection{Equivariant cohomology of Grassmannians}\label{s.eqcoh}

Let $\Gr_r(\CC^n)$ denote the Grassmannian of $r$-dimensional subspaces $V \subseteq \CC^n$. This
space comes with an action of the torus $T=(\CC^*)^n$ (induced from the action of $T$ on $\CC^n$). Therefore, it makes sense
to discuss the \emph{equivariant cohomology ring} $H_T^*\Gr_r(\CC^n)$. This ring is an
algebra over $\ZZ[t_1,\ldots,t_n]$. (A more complete exposition of equivariant
cohomology may be found in, e.g., \cite{Fultoneq}.)

As a $\ZZ[t_1,\ldots,t_n]$-module, $H_T^*\Gr_r(\CC^n)$ has a basis of \emph{Schubert classes}.  To
define these, fix the flag of subspaces
\[F_\bullet: 0\subset F_1 \subset F_2 \subset \cdots \subset
F_n=\CC^n,\]
where $F_i$ is the span of the standard basis vectors
$e_n,\,e_{n-1},\,\ldots,e_{n+1-i}$.  For each Young diagram $\lambda$ inside the $r\times (n-r)$
rectangle, which we denote by $\Lambda$, there is a corresponding {\bf Schubert variety}, defined by
\[
  X_\lambda := \{ V \subseteq \CC^n \,|\, \dim(V \cap F_{n-r+i-\lambda_i}) \geq i, \text{ for } 1\leq i \leq r \}.
\]
Since $X_{\lambda}$ is invariant under the action of $T$, and has codimension $2|\lambda|$, it determines a class $[X_\lambda]$ in $H_T^{2|\lambda|}\Gr_r(\CC^n)$.  As $\lambda$ varies over all
Young diagrams inside $\Lambda$, the classes $[X_\lambda]$ form a basis for $H_T^{*}\Gr_r(\CC^n)$
over $\ZZ[t_1,\ldots,t_n]$. Therefore in $H_T^{*}\Gr_r(\CC^n)$ we have
\[
  [X_\lambda]\cdot [X_\mu] = \sum_{\nu \subseteq \Lambda} C_{\lambda,\mu}^\nu [X_\nu],
\]
where the coefficients $C_{\lambda,\mu}^\nu \in \ZZ[t_1,\ldots,t_n]$ are the {\bf equivariant
Schubert structure coefficients}.  By homogeneity, $C_{\lambda,\mu}^\nu$ is a polynomial of degree
$|\lambda|+|\mu|-|\nu|$.  In particular, this coefficient is zero unless $|\lambda|+|\mu|\geq |\nu|$.

The polynomials $C_{\lambda,\mu}^{\nu}$ depend on the parameters $r$ and $n$, but our notation
drops this dependency, with the following justification. First, we already fixed $r$. Next, the  standard embedding $\iota\colon \Gr_r(\CC^n) \hookrightarrow \Gr_r(\CC^{n+1})$ induces a map $\iota^*\colon H_T^*\Gr_r(\CC^{n+1}) \to H_T^*\Gr_r(\CC^{n})$.  Using superscripts to indicate where a subvariety lives, we have $\iota^{-1}X_\lambda^{(n+1)} = X_\lambda^{(n)}$, and therefore $\iota^*[X_\lambda^{(n+1)}] = [X_\lambda^{(n)}]$.  Let us write $H_T^*$ for the graded inverse limit of these equivariant cohomology rings, so it is an algebra over $\ZZ[t_1,t_2,\ldots]$.  Write $\hat\sigma_\lambda \in H_T^*$ for the stable limit of the Schubert classes $[X_\lambda^{(n)}]$.  The same structure constants $C_{\lambda,\mu}^\nu$ describe
${\hat\sigma}_{\lambda}\cdot {\hat\sigma}_{\mu}$ in this
limit, so we can work in that limit without reference to $n$.

\subsection{Inequalities for $C_{\lambda,\mu}^{\nu}$ and for eigenvalues}

We will deduce Theorem~\ref{claim:main} from inequalities describing the nonvanishing of
$C_{\lambda,\mu}^{\nu}$. Let $[r]:=\{1,2,\ldots r\}$.  For any
\[I=\{i_1<i_2< \cdots<i_d\}\subseteq [r]\]
define the partition
\[
 \itop(I):=(i_d-d\geq \cdots\geq i_2-2 \geq i_1-1).
\]
This defines a bijection between subsets of $[r]$ of cardinality $d$ and partitions whose Young
diagrams are contained in a $d\times(r-d)$ rectangle. The following theorem combines the main
results of \cite{Klyachko, Knutson.Tao:99}.

\begin{Theorem}\label{thm:classicalHorn}(\cite{Klyachko}, \cite{Knutson.Tao:99})
Let $\lambda,\mu,\nu$ be partitions with at most $r$ parts such that
\begin{equation}
\label{eqn:classicalcond}
|\lambda|+|\mu|=|\nu|.
\end{equation}
The following are equivalent:
\begin{enumerate}
\item $c_{\lambda,\mu}^{\nu}\neq 0$.
\item For every $d<r$, and every triple of subsets $I,J,K\subseteq [r]$ of cardinality $d$ such that $c_{\itop(I),
\itop(J)}^{\itop(K)}\neq 0$, we have
\begin{equation}\label{eq:ineq}\sum_{i\in
I}\lambda_i+\sum_{j\in J}\mu_j\geq \sum_{k\in K}\nu_k.\end{equation}
\item There exist $r\times r$ Hermitian matrices $A,B,C$ with eigenvalues $\lambda,\mu,\nu$ such that $A+B=C.$
\end{enumerate}
\end{Theorem}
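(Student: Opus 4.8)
The plan is to assemble the two known inputs the statement packages together --- Klyachko's solution of the Hermitian eigenvalue problem and the Knutson--Tao saturation theorem --- by an induction on the rank $r$, the case $r=1$ being trivial, and to prove the cyclic chain of implications (i)$\Rightarrow$(iii)$\Rightarrow$(ii)$\Rightarrow$(i). The inductive hypothesis enters only through the index condition $c_{\itop(I),\itop(J)}^{\itop(K)}\neq 0$ occurring in (ii): this concerns subsets $I,J,K$ of size $d<r$, so the full three-way equivalence at rank $d$ is available to interpret it.

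For (i)$\Rightarrow$(iii): assuming $c_{\lambda,\mu}^{\nu}\neq 0$, I would invoke Klyachko's theorem that the rational cone generated by the triples $(\lambda,\mu,\nu)$ with $c_{\lambda,\mu}^{\nu}\neq 0$ coincides with the \emph{eigenvalue cone}, i.e. the set of $(\lambda,\mu,\nu)\in\reals^{3r}$ for which there are $r\times r$ Hermitian $A,B,C$ with $A+B=C$ and these eigenvalues. Since that set is classically a closed convex polyhedral cone that equals the set of realizable triples, the hypothesis forces $(\lambda,\mu,\nu)$ into it, giving (iii).

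For (iii)$\Rightarrow$(ii): this is the classical necessity of the Horn inequalities, which I would argue directly. Given $A+B=C$ with eigenvalues $\lambda,\mu,\nu$ and size-$d$ subsets $I,J,K$ with $c_{\itop(I),\itop(J)}^{\itop(K)}\neq 0$, the meaning of this Littlewood--Richardson coefficient for the ordinary cohomology of $\Gr_d(\CC^r)$ is the existence of a $d$-plane $W\subseteq\CC^r$ meeting the eigenspace flags of $A$, $B$, $C$ in the dimensions prescribed by $\itop(I)$, $\itop(J)$, $\itop(K)$ respectively. Restricting the identity $A+B=C$ to $W$ and applying the Ky Fan minimax formula for partial sums of eigenvalues then yields $\sum_{i\in I}\lambda_i+\sum_{j\in J}\mu_j\geq\sum_{k\in K}\nu_k$; this goes back to Horn and was put in various equivalent forms by Totaro and Klyachko \cite{Horn, Totaro, Klyachko}.

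The implication (ii)$\Rightarrow$(i) carries the weight, and I expect it to be the main obstacle. Klyachko's theorem --- proved by presenting the eigenvalue cone as the moment polytope of a torus acting on a GIT quotient built from flag varieties and computing its facets via the Hilbert--Mumford numerical criterion --- shows that this cone is cut out by the equality $|\lambda|+|\mu|=|\nu|$ together with the inequalities $\sum_{i\in I}\lambda_i+\sum_{j\in J}\mu_j\geq\sum_{k\in K}\nu_k$ indexed by the admissible one-parameter subgroups, and matching that index set to the triples satisfying $c_{\itop(I),\itop(J)}^{\itop(K)}\neq 0$ is exactly where the rank-$d$ case of the theorem is used. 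Hence a triple obeying (ii) lies in the eigenvalue cone, so --- that cone being generated by the semigroup of nonvanishing triples --- $c_{N\lambda,N\mu}^{N\nu}\neq 0$ for some $N\in\naturals$; the Knutson--Tao saturation theorem \cite{Knutson.Tao:99}, which asserts precisely that this semigroup is saturated, then forces $c_{\lambda,\mu}^{\nu}\neq 0$, closing the cycle and incidentally collapsing Horn's recursively defined list of inequalities onto the clean one in (ii). The two genuinely deep inputs are therefore Klyachko's moment-map computation of the facets and the saturation theorem, whose proof rests on the honeycomb (equivalently hive) model and a nonobvious extraction of an integral honeycomb from a rational one; in what follows we use both, together with \cite{Thomas.Yong}, as black boxes rather than reproving them.
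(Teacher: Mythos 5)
The paper does not prove Theorem~\ref{thm:classicalHorn}; it is stated explicitly as ``combin[ing] the main results of \cite{Klyachko, Knutson.Tao:99}'' and thereafter used as a black-box input to Theorems~\ref{claim:main} and~\ref{thm:equivHorn}. So your sketch is being measured against the cited literature rather than against anything written in this paper. Against that bar it is essentially correct: the equivalence (ii)$\Leftrightarrow$(iii) is Klyachko's identification of the Hermitian eigenvalue cone with the Horn cone (the necessity direction being the Schubert-intersection plus Ky Fan minimax argument you describe, going back to Horn, Totaro, and Klyachko), and (ii)$\Rightarrow$(i) is Klyachko's asymptotic nonvanishing statement combined with the Knutson--Tao saturation theorem, closed into the chain by the easy (i)$\Rightarrow$(iii). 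You are also right that the induction on rank enters exactly in recognizing the index set $\{(I,J,K):c_{\itop(I),\itop(J)}^{\itop(K)}\neq 0\}$, since those triples live at rank $d<r$. One stray remark to drop: invoking \cite{Thomas.Yong} as a black box ``in what follows'' does not belong in a proof of this classical theorem --- that reference supplies the combinatorial rule for the \emph{equivariant} coefficients $C_{\lambda,\mu}^{\nu}$ and is used by the paper only in the proof of Proposition~\ref{prop:technical}, not anywhere in establishing Theorem~\ref{thm:classicalHorn}.
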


We are now ready to state our main result, which is a generalization of
Theorem~\ref{thm:classicalHorn}.

\begin{Theorem}\label{thm:equivHorn}Let $\lambda,\mu,\nu$ be partitions with at most $r$ parts such that
\begin{equation}
\label{eqn:EqHorncond}
|\lambda|+|\mu|\geq |\nu| \mbox{\ and  \ }\max\{\lambda_i,\mu_i\}\leq \nu_i \mbox{\ for all $i\leq r$.}
\end{equation}
The following are equivalent:
\begin{enumerate}
\item $C_{\lambda,\mu}^{\nu}\neq 0$.
\item For every $d<r$, and every triple of subsets $I,J,K\subseteq [r]$ of cardinality $d$ such that $c_{\itop(I),
\itop(J)}^{\itop(K)}\neq 0$, we have $$\sum_{i\in I}\lambda_i+\sum_{j\in J}\mu_j\geq \sum_{k\in
K}\nu_k.$$

\item There exist $r\times r$ Hermitian matrices $A,B,C$ with eigenvalues $\lambda,\mu,\nu$ such that $A+B\geq C$.
\end{enumerate}
\end{Theorem}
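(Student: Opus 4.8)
The plan is to prove the cycle of implications $(iii)\Rightarrow(ii)\Rightarrow(i)\Rightarrow(iii)$, leaning on Theorem~\ref{thm:classicalHorn}, Friedland's solution to the majorized eigenvalue problem, and the Thomas--Yong rule for $C_{\lambda,\mu}^\nu$. The key conceptual device is a \emph{lifting} construction that converts the equivariant data $(\lambda,\mu,\nu)$ with $|\lambda|+|\mu|\geq|\nu|$ into classical Littlewood--Richardson data. Concretely, set $e:=|\lambda|+|\mu|-|\nu|\geq 0$ and work in an ambient $\Gr_{r+1}(\CC^{n+1})$ (or one extra part/column): pad $\lambda$ and $\mu$ by a zero part, and form $\tilde\nu$ from $\nu$ by prepending a single large part of size $n-r+e$ (equivalently, adding a new top row that absorbs the codimension defect). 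The condition $\max\{\lambda_i,\mu_i\}\leq\nu_i$ is exactly what guarantees $\lambda,\mu\subseteq\tilde\nu$ as diagrams, and a direct check with the Thomas--Yong tableau rule shows $C_{\lambda,\mu}^{\nu}\neq 0$ in $H_T^*\Gr_r$ if and only if the ordinary $c_{\tilde\lambda,\tilde\mu}^{\tilde\nu}\neq 0$ in $\Gr_{r+1}(\CC^{n+1})$. Under this correspondence, stretching $(\lambda,\mu,\nu)\mapsto(N\lambda,N\mu,N\nu)$ corresponds to stretching $(\tilde\lambda,\tilde\mu,\tilde\nu)\mapsto(N\tilde\lambda,N\tilde\mu,N\tilde\nu)$, so once the equivalences in the theorem are in place, Theorem~\ref{claim:main} follows by applying the classical saturation theorem to the lifted data.

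For $(i)\Leftrightarrow(ii)$: having reduced $C_{\lambda,\mu}^\nu\neq 0$ to $c_{\tilde\lambda,\tilde\mu}^{\tilde\nu}\neq 0$, apply part~$(2)$ of Theorem~\ref{thm:classicalHorn} in rank $r+1$. This produces Horn inequalities indexed by triples $\tilde I,\tilde J,\tilde K\subseteq[r+1]$ of each size $d+1$. The inequalities where $\tilde I,\tilde J,\tilde K$ all contain the element $r+1$ (forced, in effect, by the large new part of $\tilde\nu$ and the smallest parts of $\tilde\lambda,\tilde\mu$) reduce — after subtracting the contribution of that common index — exactly to the inequalities $\sum_{i\in I}\lambda_i+\sum_{j\in J}\mu_j\geq\sum_{k\in K}\nu_k$ over triples $I,J,K\subseteq[r]$ of size $d$; and one must check that the remaining lifted inequalities are implied by these (using $\max\{\lambda_i,\mu_i\}\le\nu_i$ and that the relevant $c_{\tau(\tilde I),\tau(\tilde J)}^{\tau(\tilde K)}$ forces such containments). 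Matching the index set $\{\tilde I: c_{\tau(\tilde I),\tau(\tilde J)}^{\tau(\tilde K)}\neq 0, \ r+1\in\tilde I\cap\tilde J\cap\tilde K\}$ with $\{I: c_{\tau(I),\tau(J)}^{\tau(K)}\neq 0\}$ is a combinatorial identity about Littlewood--Richardson coefficients of rectangular-type shapes; this is the technical heart of this step.

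For $(ii)\Leftrightarrow(iii)$: this is precisely the content of Friedland's theorem together with Fulton's theorem that the extra inequalities are redundant — the solution to ``which $(\lambda,\mu,\nu)$ occur with $A+B\geq C$'' is given by the trace inequality $|\lambda|+|\mu|\geq|\nu|$, the majorization inequalities $\max\{\lambda_i,\mu_i\}\le\nu_i$, and the Horn inequalities \eqref{eq:ineq}. (Here the majorization inequalities $\lambda_i\le\nu_i$ and $\mu_i\le\nu_i$ are themselves instances, or easy consequences, of Horn inequalities for singleton index sets, matching the hypothesis \eqref{eqn:EqHorncond}.) So $(iii)$ holds iff \eqref{eqn:EqHorncond} holds and all the inequalities in $(ii)$ hold; since \eqref{eqn:EqHorncond} is a standing hypothesis of the theorem, $(ii)\Leftrightarrow(iii)$.

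The main obstacle I anticipate is the lifting step: proving that $C_{\lambda,\mu}^\nu\neq 0 \iff c_{\tilde\lambda,\tilde\mu}^{\tilde\nu}\neq 0$ in a way that is robust under $N$-fold stretching, and then reconciling the rank-$(r+1)$ Horn index set with the rank-$r$ one. The first half should follow cleanly from the structure of the Thomas--Yong equivariant puzzle/tableau rule — one wants a combinatorial bijection between the objects it counts for $C_{\lambda,\mu}^\nu$ and the ordinary Littlewood--Richardson fillings for $c_{\tilde\lambda,\tilde\mu}^{\tilde\nu}$, compatible with scaling. The second half requires care because a naive lift introduces spurious inequalities; showing they are consequences of \eqref{eqn:EqHorncond} plus the genuine ones is where the rectangular Littlewood--Richardson combinatorics must be done precisely.
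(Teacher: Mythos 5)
Your treatment of $(ii)\Leftrightarrow(iii)$ matches the paper: this equivalence is imported wholesale from \cite{Friedland,Fulton}, with the standing hypothesis \eqref{eqn:EqHorncond} absorbing the trace and majorization inequalities.

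The lifting construction proposed for $(i)\Leftrightarrow(ii)$ does not work, and it is not a repair of a small error --- the degree bookkeeping is wrong in a way that cannot be patched. With $\tilde\lambda=\lambda$, $\tilde\mu=\mu$, and $\tilde\nu$ obtained from $\nu$ by prepending a part of size $n-r+e$ (where $e=|\lambda|+|\mu|-|\nu|$), one has
$|\tilde\lambda|+|\tilde\mu|=|\nu|+e$ while $|\tilde\nu|=|\nu|+(n-r)+e$, so $c_{\tilde\lambda,\tilde\mu}^{\tilde\nu}=0$ for trivial degree reasons whenever $n>r$. (The prepended part also overflows the $(r+1)\times(n-r)$ rectangle for $\Gr_{r+1}(\CC^{n+1})$ when $e>0$.) Already the case $r=1$, $n=2$, $\lambda=\mu=\nu=(1)$ gives $C_{(1),(1)}^{(1)}\neq 0$ but $c_{(1),(1)}^{(2,1)}=0$. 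More fundamentally, no \emph{single} classical Littlewood--Richardson coefficient captures the nonvanishing of $C_{\lambda,\mu}^{\nu}$: by the paper's Proposition~\ref{prop:technical}, $C_{\lambda,\mu}^{\nu}\neq 0$ iff \emph{some} $\mu^{\downarrow}\subseteq\mu$ of size $|\nu|-|\lambda|$ has $c_{\lambda,\mu^{\downarrow}}^{\nu}\neq 0$ --- a disjunction over many classical coefficients, and any ``lift'' would have to be compatible with that.

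The paper's route through $(i)\Leftrightarrow(ii)$ is quite different and is worth noting. For $(i)\Rightarrow(ii)$: Proposition~\ref{prop:technical}(\ref{propB}), proved combinatorially with the edge-labeled tableau rule of \cite{Thomas.Yong}, produces a $\lambda^{\downarrow}\subseteq\lambda$ with $|\lambda^{\downarrow}|+|\mu|=|\nu|$ and $c_{\lambda^{\downarrow},\mu}^{\nu}\neq 0$; the classical Horn inequalities then apply and only improve when $\lambda^{\downarrow}$ is replaced by the larger $\lambda$. For $(ii)\Rightarrow(i)$: a double induction on the degree $p=|\lambda|+|\mu|-|\nu|$ and on $r$, using Lemma~\ref{lem:more_ineq} to pass to the larger inequality set $S_d^r$, Fulton's index-restriction result (Proposition~\ref{prop:Fulton}) to descend inequalities to restricted partitions $\lambda_I,\mu_J,\nu_K$ and complements $\lambda_{I^c},\mu_{J^c},\nu_{K^c}$, and the block-diagonal-matrix splitting Lemma~\ref{lem:split} together with Proposition~\ref{prop:technical}(\ref{propA}) to reassemble. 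In particular, the paper never works in a higher-rank Grassmannian; the rank only decreases in the induction. You would need to abandon the lifting idea and instead prove something like Proposition~\ref{prop:technical} directly from the Thomas--Yong rule.
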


Theorem \ref{thm:equivHorn} asserts that the main recursive inequalities (ii) controlling nonvanishing of $C_{\lambda,\mu}^{\nu}$ are just Horn's inequalities  (\ref{eq:ineq}).  The only
difference between the governing inequalities lies in (\ref{eqn:classicalcond}) versus
(\ref{eqn:EqHorncond}).  Notice that the second condition in (\ref{eqn:EqHorncond}) is
unnecessary in Theorem~\ref{thm:classicalHorn} since it is already implied by
(\ref{eqn:classicalcond}) combined with \eqref{eq:ineq}.

In fact, we will use Theorem~\ref{thm:classicalHorn} to prove Theorem~\ref{thm:equivHorn}.  Moreover, the equivalence of conditions (ii) and (iii) is immediate from \cite{Friedland, Fulton}.
Since the inequalities of Theorem~\ref{thm:equivHorn} are homogeneous, the equivalence of (i) and (ii) immediately implies Theorem~\ref{claim:main}.

\subsection{Further comparisons to the literature}
The proof of the saturation theorem given in \cite{Knutson.Tao:99} is combinatorial, employing
their \emph{honeycomb model} for $c_{\lambda,\mu}^{\nu}$.  
In contrast, P.~Belkale first geometrically proves the equivalence ``(i)$\Leftrightarrow$(ii)'' of Theorem~\ref{thm:classicalHorn}, and then deduces the saturation theorem as an easy consequence \cite{Belkale:06}.
By comparison, our main tool is again a new combinatorial model for
$C_{\lambda,\mu}^{\nu}$; we similarly deduce (equivariant) saturation from the eigenvalue inequalities.

It seems plausible to give geometric proofs of our theorems, along the lines
of \cite{Belkale:06}, using the equivariant moving lemma of the first author \cite{Anderson}.  This approach is
especially pertinent where one does not have good combinatorial control of the equivariant Schubert coefficients, e.g., in the case of minuscule $G/P$, cf. \cite{Purbhoo.Sottile}.  (P.~Belkale and S.~Kumar \cite{BK:06} also consider the vanishing problem for classical Schubert structure constants associated to more general $G/P$'s.)

Schubert calculus on Grassmannians has two other basic extensions that have been extensively studied:
quantum and $K$-theoretic Schubert calculus.  It is therefore natural to ask if and how the Horn problem may extend in each of these directions.

The first of these was studied by P.~Belkale \cite{Belkale:08}, who established a
relationship between an eigenvalue problem for products of unitary matrices and analogues of the
saturation and Horn theorems for quantum cohomology of Grassmannians.  
(A combined quantum-equivariant extension is plausible, and investigating this seems worthwhile, but we have not yet undertaken such an investigation.)

For the second, let $k_{\lambda,\mu}^\nu$ denote the $K$-theoretic structure constant with respect to the basis
structure sheaves of Schubert varieties.  The ``easy'' implication
$k_{\lambda,\mu}^{\nu}\neq
0\implies k_{N\lambda,N\mu}^{N\nu}\neq 0$
is false in general. (However, it is not known if the
converse is true.)  For example, \cite[Section~7]{Buch:02} notes that
$k_{(1),(1)}^{(2,1)}=-1 \mbox{\ but  \ }  k_{(2),(2)}^{(4,2)}=0$.
One also can check that the same partitions give
a counterexample for saturation in $T$-equivariant
$K$-theory, as well. Moreover, consider structure constants ${\widetilde k}_{\lambda,\mu}^{\nu}$ for the multiplication of
the dual basis in $K$-theory. Using the rule of \cite[Theorem~1.6]{Thomas.Yong:direct}, one checks that ${\widetilde k}_{N\cdot(1),N\cdot(1)}^{N\cdot(2,1)}$ is nonzero
for $N=1,2,3$ but zero for $N=4$.

Summarizing, this paper addresses the remaining basic extension of Schubert calculus where a complete analogue of
the saturation theorem exists. The result linking Friedland's problem to equivariant Schubert calculus gives further evidence
towards the thesis that Schubert calculus is a natural perspective for Horn's problem.  That said, some room for clarification of this thesis remains: on one hand, the polynomials $C_{\lambda,\mu}^{\nu}$ also have representation theoretic interpretations \cite{Molev.Sagan:99}; on the other hand, saturation fails in $K$-theoretic Schubert calculus (in three forms).  Finding deeper connections and explanations for these phenomena seems an interesting possibility for future work.

\subsection{Organization}
In Section~2, we give the proof of Theorem~\ref{thm:equivHorn}, assuming a fact about the
equivariant coefficients (Proposition~\ref{prop:technical}) that we use in our inductive proof.  This in
turn is proved in Section~3, after a review of the combinatorial rule of \cite{Thomas.Yong}.

\section{Proof of Theorem~\ref{thm:equivHorn}}

As remarked above, we only need to show that parts (i) and (ii) of Theorem~\ref{thm:equivHorn} are equivalent.  We run an induction on the degree $p=|\lambda|+|\mu|-|\nu|$,
simultaneously with an induction on $r$.  
To do this, we identify two key nonvanishing criteria in the following proposition.
\begin{Proposition}
\label{prop:technical}
Assume $C_{\lambda,\mu}^{\nu}\neq 0$. 
\renewcommand{\theenumi}{\Alph{enumi}}
\begin{enumerate}
\item $C_{\lambda,\mu^{\uparrow}}^{\nu}\neq 0$ for any $\mu\subset \mu^{\uparrow}\subseteq \nu$. \label{propA}

\smallskip

\item If $|\nu|<|\lambda|+|\mu|$, then for any $s$ such that $|\nu|-|\lambda|\leq s < |\mu|$, there is a $\mu^{\downarrow}\subset \mu$ with $|\mu^{\downarrow}|=s$ and $C_{\lambda,\mu^{\downarrow}}^{\nu}\neq 0$.  (In particular, taking $s=|\nu|-|\lambda|$, we have $c_{\lambda,\mu^{\downarrow}}^\nu \neq 0$.)\label{propB}
\end{enumerate}
\renewcommand{\theenumi}{\roman{enumi}}
\end{Proposition}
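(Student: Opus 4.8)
The plan is to derive Proposition~\ref{prop:technical} directly from the combinatorial rule of \cite{Thomas.Yong} for $C_{\lambda,\mu}^{\nu}$, which is reviewed in Section~3. Recall that this rule writes $C_{\lambda,\mu}^{\nu}=\sum_{T}\wt(T)$, a sum over a finite set $\eqssyt(\lambda,\mu;\nu)$ of equivariant semistandard tableaux: each $T$ fills a shape of $|\mu|$ cells determined by $\mu$, of which $p:=|\lambda|+|\mu|-|\nu|$ are marked as $\xbox$-boxes and the remaining $|\nu|-|\lambda|$ carry an increasing filling obeying a rectification/lattice condition tied to $\nu/\lambda$; the weight $\wt(T)=\factor$ is a product over the $\xbox$-boxes of linear forms $t_a-t_b$ recorded by the $\dist$-data of their positions. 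By R.~Graham's positivity theorem, every weight $\wt(T)$ lies in one and the same pointed cone inside $\ZZ[t_1,t_2,\ldots]$, so the sum has no cancellation; hence $C_{\lambda,\mu}^{\nu}\neq 0$ if and only if $\eqssyt(\lambda,\mu;\nu)\neq\emptyset$, and every individual $\wt(T)$ is itself a nonzero polynomial. This reduces both assertions to manufacturing, from a single witnessing tableau for $C_{\lambda,\mu}^{\nu}$, a witnessing tableau for the modified coefficient.

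For part~(\ref{propA}) it suffices, by iteration, to handle $\mu^{\uparrow}=\mu\cup\{\square\}$ for one box $\square$; the hypothesis $\mu^{\uparrow}\subseteq\nu$ makes $\square$ an addable corner of $\mu$ that still lies inside $\nu$. Starting from $T\in\eqssyt(\lambda,\mu;\nu)$, I would produce $T^{\uparrow}\in\eqssyt(\lambda,\mu^{\uparrow};\nu)$ by adjoining $\square$ as a new $\xbox$-box (if necessary, first performing a jeu-de-taquin rearrangement of the existing $\xbox$-boxes to clear room near $\square$): the non-$\xbox$ subtableau is untouched, so its rectification condition persists, the $\xbox$-count rises by one to $p+1=|\lambda|+|\mu^{\uparrow}|-|\nu|$ as it must, and the extra $\dist$-factor contributed to $\wt(T^{\uparrow})$ is nonzero. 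For part~(\ref{propB}) it again suffices to delete a single box: when $|\mu|>|\nu|-|\lambda|$ we have $p\geq 1$, and the claim is that some witnessing $T$ can be taken (or adjusted by jeu de taquin) to have an $\xbox$-box at a removable corner of $\mu$; erasing it gives $T^{\downarrow}\in\eqssyt(\lambda,\mu^{\downarrow};\nu)$ with $\mu^{\downarrow}=\mu\setminus\{\square\}\subset\mu$, one fewer $\xbox$-box, and $\wt(T^{\downarrow})\neq 0$. Iterating this downward until the $\xbox$-count drops to $|\mu|-s$ reaches content size exactly $s$; and at $s=|\nu|-|\lambda|$ the coefficient $C_{\lambda,\mu^{\downarrow}}^{\nu}$ has degree $0$ and therefore equals $c_{\lambda,\mu^{\downarrow}}^{\nu}$, which is the parenthetical statement.

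The heart of the matter is the legality of these local moves in the Thomas--Yong model: for~(\ref{propB}), that among the equivariant tableaux counted by $C_{\lambda,\mu}^{\nu}$ (with $p\geq1$) one can always place an $\xbox$-box at a removable corner of $\mu$, and for~(\ref{propA}), that an addable corner of $\mu$ inside $\nu$ can always be made to host an $\xbox$-box. This is where the precise rules constraining $\xbox$-box placement relative to the increasing filling and to the $\nu/\lambda$-rectification target have to be used; I expect the proof to proceed by selecting $T$ extremally — with its $\xbox$-boxes driven as far toward the corners of $\mu$ as the lattice and jeu-de-taquin constraints allow — and then verifying that such an extremal $T$ indeed carries the required corner $\xbox$-box and that the surgery leaves the defining conditions intact. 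The remaining weight bookkeeping (nonvanishing and multiplicativity of the $\dist$-factors under inserting or deleting a corner $\xbox$-box) should be routine once the model is in place.
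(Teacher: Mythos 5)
Your high-level strategy — reduce to Corollary~\ref{cor:nonzeroness} and manufacture, from one witnessing tableau, a witnessing tableau for the modified coefficient — is exactly the paper's strategy.  But the proposal has two serious problems.

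First, you misdescribe the Thomas--Yong model.  In \cite{Thomas.Yong}, the set $\eqssyt(\nu/\lambda)$ consists of fillings of the skew shape $\nu/\lambda$ (which has $|\nu|-|\lambda|$ boxes), together with labels on certain horizontal \emph{edges}, where the multiset of all box-and-edge labels has content $\mu$; the weight is a product of factors $\factor(\ell,\xbox)=t_{\dist(\xbox)}-t_{\dist(\xbox)+i-\ell+1+\rho_\ell(\xbox)}$, one for each \emph{edge} label.  Your description — a filling of a shape with $|\mu|$ cells, $p=|\lambda|+|\mu|-|\nu|$ of them being marked ``$\xbox$-boxes'' inside a Young diagram of $\mu$, the rest carrying a rectifiable increasing filling — does not match this; you appear to be conflating it with a different (perhaps puzzle- or $\xbox$-variable--style) model.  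As written, the objects you propose to manipulate are not the objects the theorem is stated in terms of, so the surgeries (adjoin/erase a corner $\xbox$-box) do not even type-check against the actual model.

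Second, and independently, you explicitly defer the entire content of the argument: ``The heart of the matter is the legality of these local moves\dots I expect the proof to proceed by selecting $T$ extremally\dots''  In the correct model, (\ref{propA}) is proved by choosing a particular column in which to insert a new \emph{edge} label $\ell$, and then checking two nontrivial claims (the column exists, and the lattice property is preserved), while (\ref{propB}) is proved by deleting the \emph{last} edge label from the reading word and then propagating a cascade of replacements $\ell+1\mapsto\ell$, $\ell+2\mapsto\ell+1$, \dots to repair lattice violations, followed by a careful semistandardness/lattice/not-too-high verification (Lemma~\ref{l.witness}).  Neither of these is the sort of ``jeu-de-taquin rearrangement of $\xbox$-boxes toward corners, choose an extremal $T$'' argument you gesture at, and you supply no proof that any such rearrangement is legal or terminates.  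Since the legality of the local moves is precisely what Proposition~\ref{prop:technical} is about, the proposal as it stands is a restatement of the goal rather than a proof.  (Minor point: the no-cancellation step does not require invoking Graham's positivity theorem; it is immediate from formula \eqref{eqn:apfactor}, since every $\factor(\ell,\xbox)$ is of the form $t_p-t_q$ with $p<q$.)
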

We postpone the proof to the next section.

\begin{proof}[Proof of Theorem~\ref{thm:equivHorn}, (i) $\Rightarrow$ (ii)]
If $C_{\lambda,\mu}^{\nu}\neq 0,$ then by Proposition~\ref{prop:technical}(\ref{propB}), we can find $\lambda^{\downarrow}\subseteq \lambda$ such that
$|\lambda^{\downarrow}|+|\mu|=|\nu|$ and
$c_{\lambda^{\downarrow},\mu}^{\nu}\neq 0$.  
By Theorem~\ref{thm:classicalHorn}, for any triple $(I,J,K)$ such that $c_{\itop(I),\itop(J)}^{\itop(K)}\neq 0$, we have
\[
  \sum_{i\in I} \lambda^\downarrow_i + \sum_{i\in J} \mu_j \geq \sum_{k\in K} \nu_k.
\]
Since
$\sum_{i\in I} \lambda_i \geq \sum_{i\in I} \lambda^\downarrow_i$,
(i) implies (ii), as desired.
\end{proof}

Recall the bijection between $d$-subsets $I\subseteq [r]$ and partitions
$\lambda=\itop(I)$ in the $d\times(r-d)$ rectangle, and write $\sigma_I=\sigma_{\itop(I)}$ for
the corresponding Schubert class in the \emph{ordinary} cohomology ring $H^*\Gr_d(\CC^r).$  Define
$I^{\vee}\subseteq [r]$ as
\[I^{\vee}:=\{r+1-i_d<\cdots <r+1-i_2<r+1-i_1 \};\]
this is the subset associated to the shape $\lambda^{\vee}$ which is defined by taking the
complement of $\lambda$ in $d\times (r-d)$ and rotating by $180$ degrees.

We need an alternative characterization of Theorem \ref{thm:equivHorn}(ii).

\begin{Lemma}\label{lem:more_ineq}
Let $\lambda,\mu,\nu$ be partitions as in Theorem \ref{thm:equivHorn}. Then $\lambda,\mu,\nu$
satisfy condition (ii) of Theorem \ref{thm:equivHorn} if and only if for any triple $(I,J,K)$ such
that $\sigma_I\sigma_J\sigma_{K^{\vee}}\neq 0$ in the ordinary cohomology ring $H^*\Gr_d(\CC^r)$,
we have
$$\sum_{i\in I}\lambda_i+\sum_{j\in J}\mu_j\geq \sum_{k\in K}\nu_k.$$\end{Lemma}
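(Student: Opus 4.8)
The plan is to show that the two conditions on triples $(I,J,K)$ are literally the same condition, by proving that the two ``nonvanishing constraints''
\[
c_{\itop(I),\itop(J)}^{\itop(K)}\neq 0 \qquad\text{and}\qquad \sigma_I\sigma_J\sigma_{K^\vee}\neq 0 \text{ in } H^*\Gr_d(\CC^r)
\]
select exactly the same family of triples. Once this is established, the two inequality systems in the statement of the lemma range over identical index sets and hence are equivalent. So the real content is a translation between Littlewood--Richardson coefficients and triple intersection numbers on $\Gr_d(\CC^r)$.

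First I would recall the classical dictionary: for $\sigma_I=\sigma_{\itop(I)}$ in $H^*\Gr_d(\CC^r)$, the triple product $\sigma_I\sigma_J\sigma_{K^\vee}$ is a nonnegative integer multiple of the point class precisely when $|\itop(I)|+|\itop(J)|+|\itop(K^\vee)| = d(r-d)$, and in that case the coefficient equals $c_{\itop(I),\itop(J)}^{\,\widehat{\itop(K^\vee)}}$, where $\widehat{\itop(K^\vee)}$ is the complement of $\itop(K^\vee)$ in the $d\times(r-d)$ rectangle. The key combinatorial identity to pin down is that $\itop(K)$ equals the complement of $\itop(K^\vee)$ in that rectangle — this is exactly the content of the definition of $K^\vee$ given just before the lemma (the shape $\itop(K)^\vee$ is the complement-then-rotate of $\itop(K)$, and rotating the complement by $180^\circ$ then complementing again recovers $\itop(K)$ up to the $180^\circ$ symmetry, which does not affect the value of $c$ because $c_{\alpha,\beta}^{\gamma} = c_{\alpha^\vee,\beta^\vee}^{\gamma}$ under simultaneous rotation... more simply, $c_{\alpha,\beta}^{\gamma}=c_{\alpha,\gamma^\vee}^{\beta^\vee}$ and cyclic/rotational symmetries of LR coefficients). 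I would therefore verify directly from the definitions of $\itop$ and $(-)^\vee$ that $\sigma_I\sigma_J\sigma_{K^\vee}$ has nonzero (equivalently, positive) coefficient on the point class iff $c_{\itop(I),\itop(J)}^{\itop(K)}\neq 0$; the dimension/degree condition $|\itop(I)|+|\itop(J)| = |\itop(K)|$ is automatically built into the point-class statement and matches the degree condition $|\itop(I)|+|\itop(J)|=|\itop(K)|$ implicit in the LR coefficient $c_{\itop(I),\itop(J)}^{\itop(K)}$.

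With that identification in hand, condition (ii) of Theorem~\ref{thm:equivHorn} — which quantifies over all $(I,J,K)$ with $c_{\itop(I),\itop(J)}^{\itop(K)}\neq 0$ — and the condition in the lemma — which quantifies over all $(I,J,K)$ with $\sigma_I\sigma_J\sigma_{K^\vee}\neq 0$ — impose the inequality $\sum_{i\in I}\lambda_i+\sum_{j\in J}\mu_j\geq\sum_{k\in K}\nu_k$ over the same set of triples, and the two are equivalent. I expect the only delicate point to be bookkeeping with the bijection $\itop$ and the duality $(-)^\vee$: one must be careful that the ``rotate by $180^\circ$'' step in the definition of $K^\vee$ lines up correctly with the standard identity $\langle \sigma_\alpha\sigma_\beta\sigma_{\gamma^\vee},[\pt]\rangle = c_{\alpha,\beta}^{\gamma}$, and that complementation within the $d\times(r-d)$ rectangle is compatible with passing from $K$ to $\itop(K)$. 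This is routine but is where a sign-of-index error would creep in; everything else is a direct substitution.
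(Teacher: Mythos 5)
Your plan rests on the claim that the two nonvanishing conditions select exactly the same family of triples $(I,J,K)$, i.e.\ that $\sigma_I\sigma_J\sigma_{K^\vee}\neq 0$ in $H^*\Gr_d(\CC^r)$ is equivalent to $c_{\itop(I),\itop(J)}^{\itop(K)}\neq 0$. That is false, and it is the central point you would need to get right. The LR coefficient $c_{\itop(I),\itop(J)}^{\itop(K)}$ is nonzero only when the degrees balance, $|\itop(I)|+|\itop(J)|=|\itop(K)|$, equivalently $|\itop(I)|+|\itop(J)|+|\itop(K^\vee)|=d(r-d)$; this is precisely the case in which the triple product lands in top degree and equals a multiple of the point class. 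But the lemma's hypothesis $\sigma_I\sigma_J\sigma_{K^\vee}\neq 0$ only says the product is a nonzero class in whatever degree it lives, which can be strictly below the top. You conflate this with ``nonzero coefficient on the point class'' --- those are different statements, and the mismatch is exactly the content of the lemma. Concretely, take $d=1$, $r=2$, $I=J=\{1\}$, $K=\{2\}$: then $K^\vee=\{1\}$, $\itop(I)=\itop(J)=\itop(K^\vee)=\emptyset$, so $\sigma_I\sigma_J\sigma_{K^\vee}=1\neq 0$, whereas $c_{\emptyset,\emptyset}^{(1)}=0$ for degree reasons. So the index set in the lemma is strictly larger, and the lemma has genuine content: it asserts that the extra inequalities are redundant.

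That redundancy is what actually has to be proved, and it cannot be obtained by a pure relabeling as you propose. The paper's route is: first show (Claim~\ref{claim:containment}, by induction on $|\alpha|+|\beta|+|\gamma^\vee|$, multiplying by $\sigma_{(1)}$ when not in top degree) that $\sigma_\alpha\sigma_\beta\sigma_{\gamma^\vee}\neq 0$ forces the existence of $\widetilde\gamma\subseteq\gamma$ with $c_{\alpha,\beta}^{\widetilde\gamma}\neq 0$; then, for a triple $(I,J,K)$ in the larger set, apply this to get $\widetilde K$ with $\itop(\widetilde K)\subseteq\itop(K)$ and $c_{\itop(I),\itop(J)}^{\itop(\widetilde K)}\neq 0$, and observe that $\itop(\widetilde K)\subseteq\itop(K)$ means $\widetilde k_m\leq k_m$ for each $m$, so $\sum_{k\in\widetilde K}\nu_k\geq\sum_{k\in K}\nu_k$ because $\nu$ is weakly decreasing. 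Chaining the inequality from condition (ii) through this comparison yields the lemma's inequality. Without an argument of this kind --- in particular, without using the monotonicity of $\nu$ to pass from $\widetilde K$ to $K$ --- your plan has a genuine gap.
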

\begin{proof}
If $c_{\itop(I),\itop(J)}^{\itop(K)}\neq 0,$ then $\sigma_I\sigma_J\sigma_{K^{\vee}}\neq 0$, so the
inequalities of the Lemma include those of Theorem~\ref{thm:equivHorn}(ii), which proves the ``if''
statement.  For the ``only if'' statement, we first recall a well-known fact (with proof, for
completeness):
\begin{Claim}
\label{claim:containment} If $\sigma_{\alpha}\sigma_{\beta}\sigma_{\gamma^{\vee}}\neq 0$ then there
exists $\widetilde{\gamma}$ such that $\widetilde{\gamma}\subseteq\gamma$ and
$c_{\alpha,\beta}^{\widetilde{\gamma}}\neq 0$.
\end{Claim}
\noindent \emph{Proof of Claim~\ref{claim:containment}:} We proceed by induction on
\[\Delta=|\alpha|+|\beta|+|\gamma^{\vee}|.\]
If $\Delta=d(r-d)$, then we can take ${\widetilde \gamma}=\gamma$.  If $\Delta<d(r-d),$ then the
product $\sigma_{\alpha}\sigma_{\beta}\sigma_{\gamma^{\vee}}$ does not lie in the top degree of
$H^*\Gr_d(\CC^r)$.  Thus $\sigma_{\alpha}\sigma_{\beta}\sigma_{\gamma^{\vee}}\neq 0$ implies
$$\sigma_{(1)}\sigma_{\alpha}\sigma_{\beta}\sigma_{\gamma^{\vee}}\neq 0.$$
In particular, we can choose $\gamma^{\uparrow}$ such that $\gamma^{\vee}\subseteq
\gamma^{\uparrow}$, $|\gamma^{\uparrow}|=|\gamma^{\vee}|+1$ and
$\sigma_{\alpha}\sigma_{\beta}\sigma_{\gamma^{\uparrow}}\neq 0$.  By induction, we can choose
$\widetilde\gamma^{\vee}$ such that $\gamma^{\uparrow}\subseteq\widetilde\gamma^{\vee}$ and
$c_{\alpha,\beta}^{\widetilde{\gamma}}\neq 0$.  But then
$\gamma^{\vee}\subseteq\gamma^{\uparrow}\subseteq\widetilde\gamma^{\vee}$, and hence
$\widetilde{\gamma}\subseteq\gamma$.  This proves the claim. \qed

Now, if $\sigma_I\sigma_J\sigma_{K^{\vee}}\neq 0$, then by the claim there exists $\widetilde K$
such that $\itop(\widetilde K)\subseteq \itop(K)$ and $c_{\itop(I),\itop(J)}^{\itop(\widetilde
K)}\neq 0$. Thus, the condition of Theorem~\ref{thm:equivHorn}(ii) implies
$$\sum_{i\in I}\lambda_i+\sum_{j\in J}\mu_j\geq \sum_{k\in \widetilde K}\nu_k,$$
and since $\itop(\widetilde K)\subseteq \itop(K)$, we have
\[
\sum_{k\in \widetilde K}\nu_k \geq \sum_{k\in K}\nu_k.
\]
Combining these two inequalities yields the inequality of the lemma.
\end{proof}

Lemma \ref{lem:more_ineq} allows us to replace the inequalities of Theorem \ref{thm:equivHorn}(ii)
by a larger set of inequalities.  That is, we instead use inequalities corresponding to
$(I,J,K)$ from the sets
$$S_d^r:=\{(I,J,K)\subseteq [r]^3\ |\
|I|=|J|=|K|=d\ \text{and}\ \sigma_I\sigma_J\sigma_{K^{\vee}}\neq 0 \text{ in } H^*\Gr_d(\CC^r)\},$$
for $d<r$.  This larger class of inequalities allows us to perform the induction.

We will first need a result from \cite{Fulton}. Let $I=\{i_1<i_2<\cdots< i_d\}$ be a subset of $[r]$ of cardinality $d$ and let $F$ be a subset of
$[d]$ of cardinality $x$.  Define $$I_F:=\{i_f\ | f\in F\}.$$  If $F$ is a subset of $[r-d]$ of cardinality $y,$ then define $$I_F^+:=I\cup (I^c)_F$$ where $I^c$ denotes the complement of $I$ in
$[r].$

\begin{Proposition}\label{prop:Fulton}(\cite[Proposition 1]{Fulton})
Let $(I,J,K)\in S_d^r.$
\begin{enumerate}
\item If $(F,G,H)\in S_x^d,$ then $(I_F,J_G,K_H)\in S_x^r$. \label{p.fulton1}

\item If $(F,G,H)\in S_y^{r-d},$ then $(I_F^+,J_G^+,K_H^+)\in S_{d+y}^r$. \label{p.fulton2}

\end{enumerate}\end{Proposition}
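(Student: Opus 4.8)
The plan is to translate the cohomological definition of the sets $S_d^r$ into Schubert-calculus geometry, and then to use that, for a complete flag of $\CC^r$, restriction to a subspace $V$ or passage to the quotient $\CC^r/V$ transforms the incidence conditions cutting out Schubert varieties in exactly the way encoded by the operations $I\mapsto I_F$ and $I\mapsto I_F^+$. (This is \cite[Proposition~1]{Fulton}; I only indicate the shape of the argument.)

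First I would record the geometric reformulation: by Kleiman's transversality theorem, nonvanishing of a product of Schubert classes is equivalent to nonemptiness of the intersection of the corresponding Schubert varieties for a general triple of flags, so $(I,J,K)\in S_d^r$ if and only if, for general complete flags $A_\bullet,B_\bullet,C_\bullet$ of $\CC^r$, there is a $d$-plane $V\in X_I(A_\bullet)\cap X_J(B_\bullet)\cap X_{K^\vee}(C_\bullet)$, i.e. a $V$ with prescribed incidence --- $\dim(V\cap A_{i_m})\geq m$ for all $m$, likewise for $B_\bullet$, and the $\vee$-twisted analogue for $C_\bullet$ --- with respect to the three flags. Then I would record the two ``dictionaries'': if $V$ has relative position $L=\{\ell_1<\cdots<\ell_d\}$ with respect to $A_\bullet$, then $A_\bullet$ induces a complete flag $A_\bullet^V$ on $V$ with $A_m^V=A_{\ell_m}\cap V$ and a complete flag on $\CC^r/V$ indexed by the complement $L^c$; and, using $W\cap(A_i\cap V)=W\cap A_i$ for $W\subseteq V$ and $W\cap(A_i+V)=(W\cap A_i)+V$ for $V\subseteq W$, a subspace $W\subseteq V$ of relative position $F$ with respect to $A_\bullet^V$ has relative position $L_F$ with respect to $A_\bullet$, while a $W$ with $V\subseteq W\subseteq\CC^r$ whose image in $\CC^r/V$ has relative position $F$ has relative position $L_F^+$ with respect to $A_\bullet$. (Matching the $\vee$ on the third factor comes down to the elementary identity $(L^\vee)_{F^\vee}=(L_F)^\vee$.)

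Granted a general configuration $(A_\bullet,B_\bullet,C_\bullet,V)$ with $V$ in the triple intersection and with the induced flags on $V$ and on $\CC^r/V$ themselves general, both parts follow. For (\ref{p.fulton1}): the reformulation inside $\Gr_x(V)\cong\Gr_x(\CC^d)$ turns $(F,G,H)\in S_x^d$ into a $W\subseteq V$ of the prescribed relative positions with respect to $A_\bullet^V,B_\bullet^V,C_\bullet^V$; the first dictionary places $W$ in $X_{I_F}(A_\bullet)\cap X_{J_G}(B_\bullet)\cap X_{(K_H)^\vee}(C_\bullet)$, nonempty for a general triple of flags of $\CC^r$, so $(I_F,J_G,K_H)\in S_x^r$. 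Part (\ref{p.fulton2}) is the identical argument with $\CC^r/V$ and $S_y^{r-d}$ in place of $V$ and $S_x^d$, producing $W$ with $V\subseteq W\subseteq\CC^r$ and $(I_F^+,J_G^+,K_H^+)\in S_{d+y}^r$; equivalently, (\ref{p.fulton2}) is (\ref{p.fulton1}) transported through the duality $\Gr_d(\CC^r)\cong\Gr_{r-d}((\CC^r)^\ast)$.

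The substantive point, and the one I expect to cost real work, is the existence of such a general configuration: among all $(A_\bullet,B_\bullet,C_\bullet,V)$ with $V\in X_I(A_\bullet)\cap X_J(B_\bullet)\cap X_{K^\vee}(C_\bullet)$, a general one should have general ambient flags and general induced flags on both $V$ and $\CC^r/V$. I would argue that the incidence variety $\mathcal{Z}$ of such tuples is a fiber bundle over $\Gr_d(\CC^r)$ with irreducible fiber (a product of three orbit closures of the stabilizer of $V$ on the flag variety), hence irreducible, and that it dominates the space of flag triples precisely because $(I,J,K)\in S_d^r$; so its general point has general ambient flags. That $\mathcal{Z}$ is not contained in the closed locus where an induced flag degenerates is checked by exhibiting one good tuple: split $\CC^r=V_0\oplus Q_0$, pick general complete flags on $V_0$ and $Q_0$, and let $A_\bullet$ be a flag of $\CC^r$ of relative position $I$ at $V_0$ restricting to these two (such a flag is easily built by taking direct sums of initial segments of the chosen flags), and likewise $B_\bullet,C_\bullet$. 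Chevalley's theorem then upgrades ``good at one tuple'' to ``good over a dense open set of flag triples.'' For the details of this genericity step I would defer to \cite{Fulton}.
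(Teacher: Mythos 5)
The paper does not prove this proposition; it is quoted verbatim, with attribution, from Fulton's \emph{Eigenvalues of majorized Hermitian matrices and Littlewood--Richardson coefficients} (Proposition~1 there). So there is no in-paper argument for me to compare yours against, and I will evaluate the sketch on its own terms.

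Your approach --- reformulating membership in $S_d^r$ via Kleiman transversality as nonemptiness of a triple Schubert intersection for generic flags, recording the ``restriction/quotient'' dictionary for relative positions of $W\subseteq V$ and $V\subseteq W\subseteq\CC^r$, and then obtaining (\ref{p.fulton1}) and (\ref{p.fulton2}) by pushing a generic $x$- or $y$-plane back into $\CC^r$ --- is correct in outline and is in the same geometric spirit as Fulton's argument (he likewise works with flags induced on a subspace and on a quotient). The combinatorial identities you invoke check out: if $V$ has relative position $L$ with respect to $A_\bullet$ and $W\subseteq V$ has relative position $F$ with respect to the induced flag $A_\bullet^V$, then $W$ has relative position $L_F$ with respect to $A_\bullet$; the quotient statement yields $L_F^+$; and $(L^\vee)_{F^\vee}=(L_F)^\vee$ holds, which is what makes the $K^\vee$ factor behave.

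You are right that the genuine content is the genericity step, and your outline for it is sound. Two small clarifications would tighten it. First, the map from the fiber of $\mathcal{Z}$ over a fixed $V_0$ to $\mathrm{Fl}(V_0)^3\times\mathrm{Fl}(\CC^r/V_0)^3$ is actually \emph{surjective} (not just dominant) already on the open Schubert cells, by the direct-sum construction you describe; so irreducibility of $\mathcal{Z}$ plus dominance of the two projections (to the triple of ambient flags, and to the triples of induced flags) immediately gives a single generic tuple that is simultaneously generic for both, which is exactly what the argument needs. Second, when building the ``one good tuple'' for the flag $C_\bullet$ one must arrange that $V_0$ lies in $X_{\itop(K)^\vee}(C_\bullet)$, not $X_{\itop(K)}(C_\bullet)$; the direct-sum construction produces a flag whose relative position to $V_0$ is whatever subset you feed it, so this is just a matter of feeding in $K^\vee$ --- worth being explicit, since the conventions for Schubert varieties versus Schubert cells and the role of $\vee$ are a recurring source of sign errors in arguments like this. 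With those points made precise, the sketch would constitute a complete proof.
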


For any partitions $\lambda$ and $\alpha,$ define $\phi(\lambda,\alpha)$ to be the partition with
parts
$$\lambda_1,\ldots, \lambda_d,\alpha_1,\ldots,\alpha_d$$ arranged in weakly decreasing
order.

\begin{Lemma}\label{lem:split}
Let $\lambda,\mu,\nu$  and $\alpha,\beta,\gamma$ be partitions such that $C_{\lambda,\mu}^{\nu}\neq
0$ and $C_{\alpha,\beta}^{\gamma}\neq 0.$
Then $C_{\phi(\lambda,\alpha),\phi(\mu,\beta)}^{\phi(\nu,\gamma)}\neq 0.$
\end{Lemma}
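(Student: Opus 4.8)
The plan is to reduce the claim to its classical, top-degree shadow, where Theorem~\ref{thm:classicalHorn} is available, and then to climb back up to the equivariant coefficient using Proposition~\ref{prop:technical}. It is worth seeing first why the naive argument stalls: from $C_{\lambda,\mu}^{\nu}\neq 0$ one obtains, via the already-established implication (i)$\Rightarrow$(ii) of Theorem~\ref{thm:equivHorn} together with \cite{Friedland,Fulton}, Hermitian matrices with $A+B\geq C$ and eigenvalues $\lambda,\mu,\nu$, and likewise $A'+B'\geq C'$ realizing $\alpha,\beta,\gamma$; the block sums then satisfy $(A\oplus A')+(B\oplus B')\geq C\oplus C'$ and have eigenvalue triple $\phi(\lambda,\alpha),\phi(\mu,\beta),\phi(\nu,\gamma)$. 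But passing from this back to $C_{\phi(\lambda,\alpha),\phi(\mu,\beta)}^{\phi(\nu,\gamma)}\neq 0$ would invoke the implication (ii)$\Rightarrow$(i), precisely the direction still under construction. So I would instead proceed in three steps.

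Step 1 (descend to the classical case). Put $p=|\lambda|+|\mu|-|\nu|$ and $q=|\alpha|+|\beta|-|\gamma|$. If $p>0$, apply Proposition~\ref{prop:technical}(\ref{propB}) to $C_{\lambda,\mu}^{\nu}$ with $s=|\nu|-|\lambda|$; this $s$ lies in the permitted range, since $\lambda\subseteq\nu$ gives $s\geq 0$ and $p>0$ gives $s<|\mu|$, and it produces $\mu^{\downarrow}\subseteq\mu$ with $|\lambda|+|\mu^{\downarrow}|=|\nu|$ and $c_{\lambda,\mu^{\downarrow}}^{\nu}\neq 0$; if $p=0$, set $\mu^{\downarrow}=\mu$. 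Symmetrically obtain $\beta^{\downarrow}\subseteq\beta$ with $|\alpha|+|\beta^{\downarrow}|=|\gamma|$ and $c_{\alpha,\beta^{\downarrow}}^{\gamma}\neq 0$.

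Step 2 (split in the classical case). By Theorem~\ref{thm:classicalHorn} (implication (i)$\Rightarrow$(iii)) there are Hermitian $A+B=C$ and $A'+B'=C'$ realizing the triples $(\lambda,\mu^{\downarrow},\nu)$ and $(\alpha,\beta^{\downarrow},\gamma)$. Their block sums give $(A\oplus A')+(B\oplus B')=C\oplus C'$ realizing $\bigl(\phi(\lambda,\alpha),\phi(\mu^{\downarrow},\beta^{\downarrow}),\phi(\nu,\gamma)\bigr)$, and $|\phi(\lambda,\alpha)|+|\phi(\mu^{\downarrow},\beta^{\downarrow})|=(|\lambda|+|\mu^{\downarrow}|)+(|\alpha|+|\beta^{\downarrow}|)=|\nu|+|\gamma|=|\phi(\nu,\gamma)|$. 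Hence Theorem~\ref{thm:classicalHorn}, now in the form (iii)$\Rightarrow$(i), yields $c_{\phi(\lambda,\alpha),\phi(\mu^{\downarrow},\beta^{\downarrow})}^{\phi(\nu,\gamma)}\neq 0$; since this coefficient has degree $0$ it equals $C_{\phi(\lambda,\alpha),\phi(\mu^{\downarrow},\beta^{\downarrow})}^{\phi(\nu,\gamma)}$, so the latter is nonzero.

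Step 3 (ascend). Finally I would restore $\mu$ and $\beta$ in the second slot. Since $\mu^{\downarrow}\subseteq\mu$, $\beta^{\downarrow}\subseteq\beta$ and the sorting operation $\phi$ is order-preserving under componentwise containment of partitions, $\phi(\mu^{\downarrow},\beta^{\downarrow})\subseteq\phi(\mu,\beta)$; and because $C_{\lambda,\mu}^{\nu}\neq 0$ and $C_{\alpha,\beta}^{\gamma}\neq 0$ force $\mu\subseteq\nu$ and $\beta\subseteq\gamma$, the same monotonicity gives $\phi(\mu,\beta)\subseteq\phi(\nu,\gamma)$. Fix a chain of partitions $\phi(\mu^{\downarrow},\beta^{\downarrow})=\rho_0\subset\rho_1\subset\cdots\subset\rho_m=\phi(\mu,\beta)$ with $|\rho_{i+1}|=|\rho_i|+1$ and $\rho_i\subseteq\phi(\nu,\gamma)$ for every $i$. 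Applying Proposition~\ref{prop:technical}(\ref{propA}) repeatedly, with first argument $\phi(\lambda,\alpha)$ and target $\phi(\nu,\gamma)$, propagates nonvanishing along the chain from $\rho_0$ to $\rho_m$, which gives $C_{\phi(\lambda,\alpha),\phi(\mu,\beta)}^{\phi(\nu,\gamma)}\neq 0$, as desired. The real crux is the circularity flagged in the first paragraph — one cannot certify the equivariant coefficient directly from the eigenvalue/block-sum construction — and the point of the argument is that the two halves of Proposition~\ref{prop:technical} exactly bridge the equivariant coefficient and its classical specialization; the remaining ingredients (monotonicity of $\phi$ under $\subseteq$, and $C_{\alpha,\beta}^{\gamma}\neq 0\Rightarrow\alpha,\beta\subseteq\gamma$) are elementary.
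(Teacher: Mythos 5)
Your proof is correct and follows essentially the same strategy as the paper's: descend to classical Littlewood--Richardson nonvanishing via Proposition~\ref{prop:technical}(\ref{propB}), realize the split with a block sum of Hermitian matrices using Theorem~\ref{thm:classicalHorn}, and ascend back to the equivariant coefficient via Proposition~\ref{prop:technical}(\ref{propA}). The only cosmetic difference is which arguments are shrunk in the descent (you shrink $\mu$ and $\beta$, the paper shrinks $\lambda$ and $\beta$); both choices are equivalent by the symmetry of $C_{\lambda,\mu}^{\nu}$, and yours has the small advantage that Proposition~\ref{prop:technical}(\ref{propA}) then needs to be applied to only one slot.
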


\begin{proof}
Since $C_{\lambda,\mu}^{\nu}\neq 0$ and $C_{\alpha,\beta}^{\gamma}\neq 0,$ by Proposition~\ref{prop:technical}(\ref{propB}), there
exist partitions $\lambda^{\downarrow}\subseteq\lambda$ and $\beta^{\downarrow}\subseteq\beta$ such
that $c_{\lambda^{\downarrow},\mu}^{\nu}\neq 0$ and $c_{\alpha,\beta^{\downarrow}}^{\gamma}\neq 0.$
By Theorem \ref{thm:classicalHorn}(iii), there exist Hermitian matrices $A_1+B_1=C_1$ and
$A_2+B_2=C_2$ such that $A_i,B_i,C_i$ have eigenvalues $\lambda^\downarrow,\mu,\nu$ and $\alpha,\beta^\downarrow,\gamma$
respectively. Then
$$\left( \begin{array}{cc}
A_1 & 0 \\
0 & A_2  \end{array} \right)+ \left( \begin{array}{cc}
B_1 & 0 \\
0 & B_2  \end{array} \right)= \left( \begin{array}{cc}
C_1 & 0 \\
0 & C_2  \end{array} \right)$$ are Hermitian matrices with eigenvalues
$\phi(\lambda^{\downarrow},\alpha),\phi(\mu,\beta^{\downarrow}),\phi(\nu,\gamma)$ respectively.

Hence $c_{\phi(\lambda^{\downarrow},\alpha),\phi(\mu,\beta^{\downarrow})}^{\phi(\nu,\gamma)}\neq
0.$  We have
\[\phi(\lambda^{\downarrow},\alpha)\subseteq\phi(\lambda,\alpha)\subseteq \phi(\nu,\gamma) \mbox{\ and \ }
\phi(\mu,\beta^{\downarrow})\subseteq\phi(\mu,\beta)\subseteq \phi(\nu,\gamma).\]
Thus, by Proposition~\ref{prop:technical}(\ref{propA}), we conclude
$C_{\phi(\lambda,\alpha),\phi(\mu,\beta)}^{\phi(\nu,\gamma)}\neq 0.$\end{proof}

\begin{Remark}
The converse of Lemma \ref{lem:split} does not hold.  For example, we have 
$c_{(3),(2,1,1)}^{(3,2,1,1)}\neq 0,$ but $c_{(3),(1)}^{(2,1,1)}=c_{ \emptyset ,(2,1)}^{(3)}=0.$\qed
\end{Remark}

We will need the following:

\begin{Lemma}\label{lem:complement}
Let $\mu,\nu$ be partitions with at most $r$ parts such that $\mu\subseteq\nu$, and let $I,J$ be subsets
of $[r]$ of cardinality $d$.  If $\itop(I)\subseteq\itop(J)$, then $\mu_{I^c}\subseteq
\nu_{J^c}$.
\end{Lemma}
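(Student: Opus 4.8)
The plan is to translate everything into the combinatorics of subsets and their $\itop$-partitions, and then use the explicit description of $I^c$ in terms of $I$. Recall that if $I=\{i_1<\cdots<i_d\}\subseteq[r]$ then $\itop(I)=(i_d-d\geq\cdots\geq i_1-1)$, and similarly if $I^c=\{i'_1<\cdots<i'_{r-d}\}$ then $\itop(I^c)=(i'_{r-d}-(r-d)\geq\cdots\geq i'_1-1)$. A direct way to see the lemma is via the following observation: the condition $\itop(I)\subseteq\itop(J)$ is equivalent to the elementwise inequality $i_a\leq j_a$ for all $a$ (writing $J=\{j_1<\cdots<j_d\}$), since $\itop$ is order-preserving for this partial order. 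Dually, since $I^c$ and $J^c$ have the same cardinality $r-d$, and since $i_a\leq j_a$ for all $a$ forces $j'_b\leq i'_b$ for all $b$ (the complement reverses the dominance order on $d$-subsets of $[r]$), we get $\itop(J^c)\subseteq\itop(I^c)$, i.e.\ $J^c$ is, partitionwise, \emph{smaller} than $I^c$. Wait—this needs care about which direction the containment goes; I would verify the elementary fact that $\{i_a\leq j_a\ \forall a\}\iff\{i'_b\geq j'_b\ \forall b\}$ by a counting argument: $i'_b$ is the $b$-th element not in $I$, and $|\{1,\ldots,i'_b\}\setminus I|=b$, so $i'_b$ is characterized by $i'_b-\#\{a:i_a\leq i'_b\}=b$; comparing with the analogous statement for $J$ and using $i_a\leq j_a$ gives $\#\{a:j_a\leq m\}\leq\#\{a:i_a\leq m\}$ for every $m$, hence $j'_b\leq i'_b$.

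Now for the main claim. We must show $\mu_{I^c}\subseteq\nu_{J^c}$, where for a subset $L=\{\ell_1<\cdots<\ell_{r-d}\}$ of $[r]$ the symbol $\mu_L$ denotes the subpartition $(\mu_{\ell_1},\ldots,\mu_{\ell_{r-d}})$ read in weakly decreasing order (note $\mu$ has at most $r$ parts, padded with zeros, so this makes sense). With $I^c=\{i'_1<\cdots<i'_{r-d}\}$ and $J^c=\{j'_1<\cdots<j'_{r-d}\}$, the partition $\mu_{I^c}$ has parts $\mu_{i'_1}\geq\mu_{i'_2}\geq\cdots$ (already decreasing since $\mu$ is and the $i'_b$ are increasing), and likewise $\nu_{J^c}$ has parts $\nu_{j'_1}\geq\nu_{j'_2}\geq\cdots$. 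So it suffices to prove $\mu_{i'_b}\leq\nu_{j'_b}$ for each $b=1,\ldots,r-d$. From the first paragraph we have $j'_b\leq i'_b$, hence $\nu_{j'_b}\geq\nu_{i'_b}$ since $\nu$ is weakly decreasing; and since $\mu\subseteq\nu$ means $\mu_i\leq\nu_i$ for all $i$, we get $\mu_{i'_b}\leq\nu_{i'_b}\leq\nu_{j'_b}$, as required.

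The only real obstacle is the bookkeeping in the first paragraph: getting the direction of the order-reversal right and confirming that $\itop(I)\subseteq\itop(J)$ really is the same as $i_a\leq j_a$ for all $a$. This is a standard fact (it is the statement that $\itop$ is an isomorphism of posets from $d$-subsets of $[r]$ under the Gale/dominance order to partitions in the $d\times(r-d)$ box under containment), but I would include the short verification via the counting characterization of the complement elements rather than cite it, since it is quick and self-contained. Everything after that is just monotonicity of $\mu$ and $\nu$ together with $\mu\subseteq\nu$, with no case analysis needed.
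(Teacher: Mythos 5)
Your proposal is correct and follows essentially the same route as the paper's proof: translate $\itop(I)\subseteq\itop(J)$ into the elementwise inequalities $i_a\leq j_a$, observe that complementation reverses these to $j'_b\leq i'_b$, and then chain $\mu_{i'_b}\leq\nu_{i'_b}\leq\nu_{j'_b}$ using $\mu\subseteq\nu$ together with the fact that $\nu$ is weakly decreasing. The only difference is that you spell out the counting verification of the complement-reversal step, which the paper asserts without justification.
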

\begin{proof}
Let $I=(i_1<\cdots<i_d)$ and $J=(j_1<\cdots<j_d)$.  Since $\itop(I)\subseteq\itop(J)$, we have that $i_k\leq j_k$ for all $k\in[d]$.  This implies that if $$I^c=(i'_1<\cdots< i'_{r-d})\quad\text{and}\quad J^c=(j'_1<\cdots<j'_{r-d}),$$ then $i'_k\geq j'_k$ for all $k\in[r-d]$.  Now $\mu_{i'_k}\leq \nu_{j'_k}$, since  $\mu\subseteq\nu$.
\end{proof}

\noindent
\emph{Proof of Theorem \ref{thm:equivHorn}, (ii) $\Rightarrow$ (i):}
Let $p:=|\lambda|+|\mu|-|\nu|$. We prove the converse by a double induction on $p$ and $r$.
If $p=0$, then (ii) implies (i) by Theorem \ref{thm:classicalHorn}.  The second induction is on $r$.
In particular, if $r=1$, then $C_{\lambda,\mu}^{\nu}\neq0$ if and only if $\lambda_1+\mu_1\geq
\nu_1$.  These are the base cases of our induction.

Now assume $p>0$ and $r>1$. Suppose that $(\lambda,\mu,\nu)$ satisfies the inequalities given by
triples of subsets in $S_d^r$ for all $d<r$. In order to reach a contradiction, suppose
$C_{\lambda,\mu}^{\nu}=0$.

Since $p>0,$ we can assume that $|\lambda|\geq 1.$  Remove any box from $\lambda$, to obtain a
subpartition $\lambda^{\downarrow}\subseteq\lambda$ with $|\lambda|=|\lambda^{\downarrow}|+1$. By
Proposition~\ref{prop:technical}(\ref{propA}) and condition (\ref{eqn:EqHorncond}) of
Theorem~\ref{thm:equivHorn}, we know $C_{\lambda^{\downarrow},\mu}^{\nu}=0$.  By induction on $p$,
since $|\lambda^{\downarrow}|+|\mu|-|\nu|=p-1$, we can choose an inequality $(I,J,K)\in S_d^r$
satisfied by $(\lambda,\mu,\nu)$ but not by $(\lambda^{\downarrow},\mu,\nu)$.  It is therefore true
that
\begin{equation}
\label{eqn:actuallyeq}
\sum_{i\in I}\lambda_i+\sum_{j\in J}\mu_j= \sum_{k\in K}\nu_k.\quad
\end{equation}
Let $\lambda_I:=(\lambda_{i_1}\geq\cdots\geq\lambda_{i_d})$ and similarly define $\mu_J$ and
$\nu_K.$  Note that equation \eqref{eqn:actuallyeq} is the statement $|\lambda_I|+|\mu_J|=|\nu_K|$.
By assumption, $(\lambda,\mu,\nu)$ satisfies $(I,J,K)$. If $(F,G,H)\in S_{x}^d$ (for any $x<d$)
then by Proposition \ref{prop:Fulton}(\ref{p.fulton1}), $(\lambda,\mu,\nu)$ also satisfies $(I_F,I_G,K_H)\in
S_{x}^r$. This is the same as saying that $(\lambda_I,\mu_J,\nu_K)$ satisfies the inequalities
$(F,G,H)\in S_x^d$ for all $x<d$.  Thus, by Theorem \ref{thm:classicalHorn},
$c_{\lambda_I,\mu_J}^{\nu_K}\neq 0$.

Consider the partitions $\lambda_{I^c},\mu_{J^c},\nu_{K^c}$, where $I^c,J^c,K^c$ are the complements
of $I,J,K$ in $[r]$.  By Claim~\ref{claim:containment}, $\itop(I)\subseteq\itop(K)$, so by Lemma~\ref{lem:complement}, $\lambda_{I^c}$ is a
subpartition of $\nu_{K^c}$. Similarly, we have that $\mu_{J^c}$ is a subpartition of $\nu_{K^c}$.
Now, again using our assumption that $(\lambda,\mu,\nu)$ satisfies the inequalities $(I,J,K)\in
S_{d}^r$, it follows from Proposition \ref{prop:Fulton}(\ref{p.fulton2}) and equation \eqref{eqn:actuallyeq} that
$(\lambda_{I^c},\mu_{J^c},\nu_{K^c})$ satisfies the inequalities $(F,G,H)\in S_y^{r-d}$ for $y<r-d$.
Since $r-d<r$, we have by induction that $C_{\lambda_{I^c},\mu_{J^c}}^{\nu_{K^c}}\neq 0$.

Since $c_{\lambda_I,\mu_J}^{\nu_K}\neq 0$ and $C_{\lambda_{I^c},\mu_{J^c}}^{\nu_{K^c}}\neq 0$,
Lemma \ref{lem:split} implies that $C_{\lambda,\mu}^{\nu}\neq 0$, which contradicts our original assumption. This completes the proof.\qed

\section{Proof of Proposition~\ref{prop:technical}}
\subsection{A Littlewood-Richardson rule via edge-labeled tableaux}

 There are several combinatorial rules for computing the equivariant structure
constants $C_{\lambda,\mu}^\nu$; see, e.g., \cite{Molev.Sagan:99} and \cite{Knutson.Tao:03} for
some early ones (the latter being the first one to manifest the ``Graham-positivity'' of the polynomials
$C_{\lambda,\mu}^{\nu}$). However, in order to prove Proposition~\ref{prop:technical} we use a more recent rule of
\cite{Thomas.Yong}.

Consider Young diagrams $\lambda,\mu,\nu$ inside $\Lambda$, with $\lambda,\mu\subseteq \nu$.  An \define{equivariant Young tableau} of shape $\nu/\lambda$ and content $\mu$ is a filling of the boxes of the skew shape $\nu/\lambda$, and labeling of some of the edges, by integers $1,2,\ldots,|\mu|$, where $1$ appears $\mu_1$ times, $2$ appears $\mu_2$ times, etc.  The edges that can be labeled are the horizontal edges of boxes in $\nu/\lambda$, as well as edges along the southern border of $\lambda$; several examples are given below.
The tableau is \define{semistandard} if the box labels weakly increase along rows (left to right), and all labels strictly increase down columns.  A single edge may be labeled by a \emph{set} of integers, without repeats; the smallest of them must be strictly greater than the label of the box above, and the largest must be strictly less than the label of the box below.

\begin{Example} Below is an equivariant semistandard Young tableau on $(4,2,2)/(2,1)$.
\[
\begin{picture}(100,60)
\put(20,40){$\tableau{{\ }&{\ }&{1 }&{1 }\\{ \ }&{1}\\{2}&{3}}$}
\put(60,37){$2,3$}
\put(83,37){$2$}
\end{picture}
\]
The content of this tableau is $(3,3,2)$.
\qed
\end{Example}

Let $\eqssyt(\nu/\lambda)$ be the set of all equivariant semistandard Young tableaux of shape $\nu/\lambda$.
A tableau $T\in \eqssyt(\nu/\lambda)$ is \define{lattice} if, for every column $c$ and every label $\ell$, we have:
\[
  (\#\text{ $\ell$'s weakly right of column }c ) \geq (\#\text{$(\ell+1)$'s weakly right of column }c ).
\]
The lattice condition can also be phrased in terms of the \define{column reading word} $w(T)$, which is obtained by reading the columns from top to bottom, starting with the rightmost column and moving to the left.  (In the example above, $w(T) = 1\;2\;1\;2\;3\;1\;3\;2$.)  The tableau $T$ is lattice if and only if $w(T)$ is a lattice word---that is, for each $\ell$ and each $p$, among the first $p$ letters of $w(T)$, the number of $\ell$'s that appear is at least the number of $\ell+1$'s.

Given a tableau $T\in \eqssyt(\nu/\lambda)$, a (box or edge) label $\ell$ is \define{too high} if it appears weakly above the upper edge of a box in row $\ell$.  In the above example, all edge labels are too high.
(When there are no edge labels, the semistandard and lattice conditions imply no box label is too high, but in general the three conditions are independent.)

Each box in the $r \times (n-r)$ rectangle $\Lambda$ has a \define{distance} from the lower-left box: Using matrix coordinates for a box $\xbox=(i,j)$, we define
\[\dist(\xbox) = r+j-i.\]

Now suppose an edge label $\ell$ lies on the bottom edge of a box $\xbox$ in row $i$.  Let $\rho_\ell(\xbox)$ be the number of times $\ell$ appears as a (box or edge) label strictly to the right of $\xbox$.  We define
\begin{equation}
\label{eqn:apfactor}
  \factor(\ell,\xbox) = t_{\dist(\xbox)} - t_{\dist(\xbox)+i-\ell+1+\rho_\ell(\xbox)}.
\end{equation}
When the edge label is not too high, this is always of the form $t_p-t_q$, for $p<q$.  (In particular, it is nonzero.)  Finally, we define\footnote{In \cite{Thomas.Yong}, this is called the ``{\it a priori} weight'', to distinguish it from a weight arising from a sliding algorithm; hence the prefix ``$\mathtt{ap}$''.} the \define{weight} of $T\in \eqssyt$ by
\begin{equation}
\label{eqn:apwt}
  \wt(T) = \prod \factor(\ell,\xbox),
\end{equation}
the product being over all edge labels $\ell$.

We can now state a combinatorial rule for equivariant Schubert calculus.

\begin{Theorem}[{\cite[Theorem~3.1]{Thomas.Yong}}] \label{thm:ty}
We have $C_{\lambda,\mu}^\nu = \sum_T \wt(T)$, where the sum is over all $T\in \eqssyt(\nu/\lambda)$
of content $\mu$ that are lattice and have no label which is too high.
\end{Theorem}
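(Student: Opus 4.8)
Theorem~\ref{thm:ty} is Theorem~3.1 of \cite{Thomas.Yong}, so in the present paper it is used as a black box; what follows is a sketch of the argument behind it rather than something reproved here.

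A first orienting remark: when $|\nu|=|\lambda|+|\mu|$ the skew shape $\nu/\lambda$ has exactly $|\mu|$ boxes, so a content-$\mu$ filling fills every box and leaves no edge available to be labeled. Then $\wt(T)=1$ for every admissible $T$, the ``too high'' condition is vacuous, and the sum collapses to the classical Littlewood--Richardson count of $c_{\lambda,\mu}^{\nu}$. Thus the rule specializes correctly, and all of its content is in the edge-labeled, positive-degree regime where the torus binomials $\factor(\ell,\xbox)$ appear.

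The plan is to prove the identity by equivariant jeu de taquin. One first defines slides on edge-labeled tableaux --- an inner corner is vacated and a label migrates in from the right or from below, edge labels participating whenever a horizontal edge is crossed --- and the associated notion of rectification of $T\in\eqssyt(\nu/\lambda)$ to straight shape by inner slides. Step one is confluence: the rectification of $T$ is independent of the order of slides. This is obtained by constructing an \emph{infusion involution} on pairs of edge-labeled tableaux, in the spirit of the non-equivariant theory but now carrying the edge labels through; it also yields reversibility of the slides. Step two identifies $C_{\lambda,\mu}^{\nu}$ with a weighted count of the $T\in\eqssyt(\nu/\lambda)$ of content $\mu$ whose rectification is a fixed superstandard tableau, the standard dictionary between ``rectifies to superstandard'' and ``lattice with no too-high label'' recasting this in the form stated; this is proved by induction with the equivariant Pieri rule as base case, using that the slides are compatible with Schubert multiplication (alternatively one may match the count bijectively with an already-established rule such as that of \cite{Molev.Sagan:99} or \cite{Knutson.Tao:03}). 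Step three shows that the weight generated by the sliding algorithm --- a product of binomials $t_p-t_q$ recording, slide by slide, the torus weight of the fixed point being exchanged --- equals the closed form $\wt(T)=\prod\factor(\ell,\xbox)$ of \eqref{eqn:apwt}: one checks that $\dist(\xbox)$, the offset $i-\ell+1$, and the multiplicity correction $\rho_\ell(\xbox)$ in \eqref{eqn:apfactor} combine so that $\factor(\ell,\xbox)$ is invariant along the slide path of the box carrying $\ell$.

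The main obstacle is running steps one and three in the edge-labeled setting rather than the classical one. Jeu de taquin confluence is already delicate without edge labels; allowing a single edge to carry a \emph{set} of labels, and allowing ``too high'' labels to appear transiently during slides even when the final tableau has none, forces a genuine extension of the infusion involution and careful control of the slide bookkeeping. The weight identity is the other pressure point: the closed form \eqref{eqn:apfactor} must be shown slide-invariant, and it is this requirement that pins down the definition of $\dist$ and the appearance of $\rho_\ell(\xbox)$ rather than a naive count. For the uses made of the theorem here --- in Proposition~\ref{prop:technical} --- one needs only Theorem~\ref{thm:ty} as stated, together with the structural features of $\wt$ already recorded above: that each $\factor(\ell,\xbox)$ is a nonzero $t_p-t_q$ with $p<q$ whenever no label is too high.
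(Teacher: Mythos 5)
The paper does not prove this statement; it cites it as Theorem~3.1 of \cite{Thomas.Yong} and uses it as a black box, exactly as you observe. Your sketch of the underlying argument --- equivariant jeu de taquin with an infusion involution to get confluence, identification of the weighted count via an inductive/Pieri argument or a match with \cite{Molev.Sagan:99, Knutson.Tao:03}, and slide-invariance of the closed-form weight $\factor(\ell,\xbox)$ --- accurately reflects the strategy of that paper, and your remark that only the statement plus the nonvanishing of each $\factor(\ell,\xbox)$ is what the present paper actually needs (for Corollary~\ref{cor:nonzeroness} and Proposition~\ref{prop:technical}) is exactly right.
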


This is a nonnegative rule: from the definition of the weights, complete cancellation is impossible,
so the existence of one such $T$ means that $C_{\lambda,\mu}^\nu$ is nonzero.  In particular:

\begin{Corollary}
\label{cor:nonzeroness} The coefficient $C_{\lambda,\mu}^\nu$ is nonzero if and only if there
exists a tableau $T\in \eqssyt(\nu/\lambda)$ of content $\mu$ which is lattice and has no label
which is too high.
\end{Corollary}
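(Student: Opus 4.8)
The plan is to deduce this directly from Theorem~\ref{thm:ty} together with the sign structure of the weights $\wt(T)$; the content is entirely the observation that complete cancellation cannot occur in the Thomas--Yong rule.

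The backward implication is the substantive one. Suppose there exists a lattice tableau $T\in\eqssyt(\nu/\lambda)$ of content $\mu$ with no label too high. By Theorem~\ref{thm:ty}, $C_{\lambda,\mu}^\nu=\sum_{T'}\wt(T')$, a sum over a nonempty set of tableaux. The key point is that, because every tableau occurring in this sum has no label which is too high, each factor $\factor(\ell,\xbox)$ appearing in any $\wt(T')$ is of the form $t_p-t_q$ with $p<q$, as recorded just after \eqref{eqn:apfactor}. I would then specialize the variables to make all such factors simultaneously positive: set $t_i\mapsto -i$ for all $i$. Under this specialization $t_p-t_q\mapsto q-p>0$ whenever $p<q$, so each $\wt(T')$ becomes a product of positive integers, hence itself a positive integer, and the whole sum becomes a sum of positive integers over a nonempty index set. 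Thus $C_{\lambda,\mu}^\nu$ evaluates to a strictly positive integer under this specialization, and therefore $C_{\lambda,\mu}^\nu\neq 0$ as a polynomial in $\ZZ[t_1,t_2,\ldots]$.

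The forward implication is the contrapositive of the easy direction: if there is no lattice tableau of content $\mu$ on $\nu/\lambda$ with no label too high, then the index set of the sum in Theorem~\ref{thm:ty} is empty, so $C_{\lambda,\mu}^\nu=0$. There is no real obstacle beyond correctly invoking the shape $t_p-t_q$, $p<q$, of the factors, which is exactly what the ``no label too high'' hypothesis guarantees; the numerical specialization then rules out complete cancellation. (Alternatively one may argue without specializing, by noting that a nonzero $\ZZ_{>0}$-linear combination of products of the expressions $t_p-t_q$ with $p<q$ is a nonzero element of $\ZZ[t_1,t_2,\ldots]$ — this is the Graham-positivity phenomenon — but the evaluation at $t_i=-i$ is the quickest route.)
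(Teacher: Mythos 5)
Your proof is correct and follows the same route as the paper: the paper simply observes that ``from the definition of the weights, complete cancellation is impossible,'' and you have filled in that observation with the concrete specialization $t_i\mapsto -i$, under which each factor $t_p-t_q$ (with $p<q$, guaranteed by the no-too-high hypothesis) becomes a positive integer, so a nonempty sum of weights cannot vanish. The forward direction is likewise just the emptiness of the index set, as you say.
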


\begin{Example}
Consider the following lattice and semistandard tableau:
\[
\begin{picture}(100,60)
\put(20,40){$\tableau{{\ }&{\ }&{\ }&{\ }\\{ \ }&{1}\\{2}&{2}}$}
\put(65,37){$1$}
\put(85,37){$1$}
\put(26,-2){$3$}
\end{picture}
\]
The associated $\wt$ is $(t_1-t_2)(t_4-t_6)(t_5-t_6)$; hence $C_{(4,1),(3,2,1)}^{(4,2,2)}\neq 0$.\qed
\end{Example}

\subsection{Proof of Proposition~\ref{prop:technical}}
Our arguments for (\ref{propA}) and (\ref{propB}) are combinatorial, and both are based on Corollary~\ref{cor:nonzeroness}.

\noindent
\emph{Proof of (\ref{propA}):}  Let $T$ be a witnessing tableau for $C_{\lambda,\mu}^{\nu}\neq 0$. That is, $T$ is an (equivariant)
semistandard tableau of shape $\nu/\lambda$ that is lattice, has content $\mu$,
and has no label that is too high.
If $\mu=\nu$ then
the desired assertion is trivial. Otherwise, by induction we quickly reduce to the case that $\mu^{\uparrow}/\mu$ is a single
box. Suppose this additional box is a corner added to row $\ell$ of the shape of $\mu$. Our goal is to construct
$T^{\uparrow}$ by adding a single edge label $\ell$ to $T$ so that $T^{\uparrow}$ witnesses $C_{\lambda,\mu^{\uparrow}}^{\nu}\neq 0$.

\noindent
{\sf Procedure to obtain $T^{\uparrow}$:} Find the leftmost column $c$ that
\begin{itemize}
\item does not already have $\ell$ in the same column; and
\item placing $\ell$ in that column as an edge label does not make that new $\ell$ \emph{too high}.
\end{itemize}
Place $\ell$ in column $c$ as an edge label. (This placement is uniquely determined.)

First we need to establish:
\begin{Claim}\label{cexists}
The column $c$ exists.
\end{Claim}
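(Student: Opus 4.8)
The plan is to argue by contradiction: suppose that for every column $c$ of $\nu/\lambda$, either $\ell$ already appears in column $c$, or placing $\ell$ as an edge label in column $c$ would make that new $\ell$ too high. I want to derive that the lattice word $w(T)$ of $T$ cannot in fact have content $\mu$ — more precisely, I want to count the $\ell$'s and show there are already at least $\mu_\ell^{\uparrow} = \mu_\ell + 1$ of them, contradicting that $T$ has content $\mu$.

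First I would recall what ``$\ell$ would be too high in column $c$'' means geometrically: an edge label $\ell$ on the bottom edge of a box $\xbox = (i,j)$ is too high precisely when $i \leq \ell$ (the edge lies weakly above the top of a box in row $\ell$). So the only columns where one could legitimately place a new $\ell$ as an edge label are those whose skew shape $\nu/\lambda$ extends down to row $\ell+1$ or below; for such a column, the edge one would use is the bottom edge of the box in row $\ell$ of that column (or the southern border of $\lambda$ if $\lambda$ reaches row $\ell$ there). So the hypothesis of the (supposed) non-existence becomes: every column whose shape reaches row $\ell+1$ already contains an $\ell$ (as a box label or an edge label).

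Next, I would use the semistandard and lattice structure of $T$. Since $\mu^{\uparrow}/\mu$ is a single box added as a corner to row $\ell$ of $\mu$, we have $\mu_\ell < \mu_{\ell-1}$ (so row $\ell$ is not already as long as row $\ell-1$), and — because $\mu \subseteq \mu^{\uparrow} \subseteq \nu$ — the shape $\nu$ is large enough to accommodate the extra $\ell$; in particular $\nu$ must have a row $\ell+1$ strictly shorter than row $\ell$, or $\nu$ itself reaches down far enough. The key combinatorial input is that in a semistandard, lattice tableau, the $\ell$'s occupy an ``interval'' of columns in a controlled way relative to the $(\ell-1)$'s and $(\ell+1)$'s. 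Counting: the number of columns whose shape reaches row $\ell+1$ is at least $\lambda_{\ell+1} \geq \cdots$ — actually one wants $\nu_{\ell+1}$ or the relevant row length of $\nu$ — and each such column contributes an $\ell$ by the contradiction hypothesis. I would then compare this count to $\mu_\ell$ using the bound relating $\mu_\ell$ to the row lengths of $\nu$ and the content constraint, and also the fact that boxes labeled $\ell$ in rows $\geq \ell+1$ are forbidden (an $\ell$ in row $i > \ell$ is not too high but would violate the column-strictness combined with the lattice/content bookkeeping — one must be careful here, since box labels $\ell$ can legitimately sit below row $\ell$). The cleanest route is probably to track edge labels and box labels together via $\rho_\ell$ and the column reading word, and to show that the number of columns forced to contain an $\ell$ exceeds the total supply $\mu_\ell$ of $\ell$'s, contradicting the content.

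The main obstacle I anticipate is the bookkeeping distinguishing box labels $\ell$ from edge labels $\ell$, and correctly identifying ``which edge'' a hypothetical new $\ell$ would sit on in a given column — the procedure says the placement is ``uniquely determined'', and making that precise (it is the highest available horizontal edge in that column whose box-below is in row $> \ell$, or the southern border of $\lambda$) is where the argument has to be careful. A secondary subtlety is the base of the induction on $\mu^{\uparrow}/\mu$: one must check that reducing to a single added box is legitimate, i.e. that if $C_{\lambda,\mu'}^{\nu} \neq 0$ for the intermediate shape then the hypotheses ($\mu' \subseteq \nu$, and $\mu'/\mu$ a corner in some row) still hold so the single-box step applies — this is straightforward but should be stated. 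Once the geometry of ``too high'' and the uniqueness of placement are pinned down, the contradiction is a counting argument comparing $\#\{\text{columns reaching row } \ell+1\}$ against $\mu_\ell$, using $\mu_\ell < \nu_\ell$ and $\mu \subseteq \nu$.
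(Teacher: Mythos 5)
Your overall plan --- argue by contradiction and count columns that are forced to carry an $\ell$ against the content bound $\mu_\ell$ --- is sound and closely parallels the paper's own argument, which organizes essentially the same count around the leftmost column $d$ lacking an $\ell$ (showing $\nu$ has fewer than $\ell$ boxes in column $d$, while $\mu^{\uparrow}$ has at least $\ell$, contradicting $\mu^{\uparrow}\subseteq\nu$). However, the writeup has two genuine gaps.

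First, the ``too high'' criterion is off by one, and the error propagates to the count. An edge label $\ell$ on the bottom edge of a box in row $i$ lies weakly above the upper edge of a box in row $\ell$ precisely when $i\le\ell-1$, not $i\le\ell$; equivalently, such a label is \emph{not} too high iff $i\ge\ell$. (Check against the paper's witnessing tableau for $C_{(4,1),(3,2,1)}^{(4,2,2)}$, where an edge label $3$ sits on the bottom of a box in row $3$ and is legitimate.) Consequently the columns in which a new $\ell$ could be placed without being too high are exactly those in which $\nu$ has at least $\ell$ boxes --- there are $\nu_\ell$ of these --- not the columns reaching row $\ell+1$, which number $\nu_{\ell+1}$. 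This matters for the comparison you set up: from $\mu^{\uparrow}\subseteq\nu$ one gets $\mu_\ell + 1 = \mu^{\uparrow}_\ell\le\nu_\ell$, i.e.\ $\mu_\ell<\nu_\ell$, so $\nu_\ell$ forced columns does contradict the content; but $\nu_{\ell+1}$ need not exceed $\mu_\ell$, so with your threshold the comparison does not close.

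Second, the counting step is left as a gesture, and the mechanism that makes it work is not articulated. To conclude that every column $c$ in which $\nu$ has at least $\ell$ boxes must already carry an $\ell$, you need to rule out the possibility that inserting $\ell$ in such a column is nonetheless forced too high by semistandardness. The paper does this using the hypothesis that $T$ has no too-high labels: if the semistandard slot for $\ell$ in column $c$ were weakly above the top of row $\ell$, then the label $m>\ell$ sitting just below that slot would itself already be too high in $T$, a contradiction. Without this observation the claim that those $\nu_\ell$ columns all contain $\ell$ is not established. (Your aside that ``boxes labeled $\ell$ in rows $\geq\ell+1$ are forbidden'' is also false --- such box labels are perfectly allowed --- though nothing in the argument needs it.)
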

\begin{proof} To reach a contradiction, suppose otherwise. First assume there is a
column $d$
of $T$ that does not have $\ell$ in it. We can take this column to be leftmost among all choices.

By our assumption for contradiction, we could not
insert $\ell$ into column $d$ because doing so would make
it too high. Thus the edge we would put $\ell$ into (as forced by semistandardness) is the upper edge of the box in row $\ell$, or higher. If there is a box label $m$ in the box of row $\ell$ in that column, then
$m>\ell$ (by assumption). But then $m$ was too high in $T$, a contradiction. Hence it must be the case 
that the $d$th column of $\nu$ has at most $\ell-1$ boxes.
Since there were $\ell$'s in each of the columns to the left, we conclude that
the corner box $\mu^{\uparrow}/\mu$ must be in column $d$ or to the right.  
This means the $d$th column of $\mu^{\uparrow}$ has at least $\ell$ boxes, which contradicts the assumption $\mu^{\uparrow} \subseteq \nu$.
Finally, if column $d$ does not exist, i.e., every column
of $T$ has an $\ell$ in it, then $\mu_{\ell}=\nu_{\ell}$ and thus
$\mu^{\uparrow}\not\subseteq\nu$, a contradiction.
\end{proof}

Since we are adding $\ell$ to an edge, the assumptions imply
that $T^{\uparrow}$ is semistandard (as the horizontal semistandard condition is vacuous here). 
\begin{Claim}
$T^{\uparrow}$ is lattice.
\end{Claim}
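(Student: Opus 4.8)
The plan is to show that adding the single edge label $\ell$ in column $c$ preserves the lattice condition, working directly with the column reading word $w(T)$. Recall that $w(T)$ is read column by column from top to bottom, starting at the rightmost column and moving left, so inserting $\ell$ into column $c$ amounts to inserting a single letter $\ell$ into $w(T)$ at the position just after all the entries of columns strictly to the right of $c$ and the entries of column $c$ above the edge where $\ell$ is placed. Call the resulting word $w(T^\uparrow)$. I must verify that for every prefix of $w(T^\uparrow)$ and every value $m$, the number of $m$'s is at least the number of $(m+1)$'s.

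The key observation is that inserting an extra $\ell$ into a lattice word can only violate the lattice condition in one way: by creating a prefix where the count of $\ell$'s exceeds the count of $(\ell-1)$'s. (Inserting an $\ell$ only helps the inequality for the pair $(\ell,\ell+1)$, and is neutral for all other pairs $(m,m+1)$ with $m\neq \ell-1, \ell$.) So the entire claim reduces to checking that in $w(T^\uparrow)$, no prefix has more $\ell$'s than $(\ell-1)$'s. For prefixes ending at or before the inserted letter, this will follow from the choice of $c$ as the \emph{leftmost} valid column: every column strictly to the right of $c$ either already contains an $\ell$ or was rejected only because placing $\ell$ there would be too high, and I must argue that the "too high" rejection still leaves enough $(\ell-1)$'s to the right. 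Here is where I would use the structure carefully: if column $d > c$ was rejected for being too high, the edge forced by semistandardness lies at or above the top of row $\ell$, which (as in the proof of Claim~\ref{cexists}) forces column $d$ of $\nu$ to have at most $\ell-1$ boxes, hence in particular no $\ell$ can appear there; but then the lattice condition for $T$ itself, applied weakly right of column $d$, together with the fact that row $\ell$ of $\nu$ is nonempty (since $\mu^\uparrow \subseteq \nu$ and the corner is in row $\ell$), should give the needed slack in $(\ell-1)$'s.

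The cleanest route is probably this: let $c'$ be the column containing the new edge label's position in the reading order, and compare the count of $\ell$'s versus $(\ell-1)$'s in $w(T^\uparrow)$ restricted to letters read up to and including the new $\ell$. By the lattice property of $T$, right of column $c$ we have (\#$(\ell-1)$'s) $\geq$ (\#$\ell$'s); I need this to be a \emph{strict} inequality, i.e. at least one more $(\ell-1)$ than $\ell$, so that after adding one $\ell$ the inequality is still $\geq$. If it were an equality, then every column weakly right of $c$ that contains an $\ell$ also contains an $(\ell-1)$ directly above it, and the $(\ell-1)$'s and $\ell$'s are "paired up"; I'd then derive that column $c$ already contains an $\ell$ or that placing $\ell$ in $c$ is too high (because the box above the target edge in column $c$ would carry label $\ell-1$ or the column is too short), contradicting the choice of $c$. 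For prefixes of $w(T^\uparrow)$ ending strictly after the inserted letter, the counts of $\ell$ and $(\ell-1)$ differ from those in the corresponding prefix of $w(T)$ by exactly $+1$ in the $\ell$-count, and the same pairing/leftmost argument carried one column further shows no violation arises.

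The main obstacle I anticipate is the bookkeeping around the "too high" rejections of columns to the right of $c$: translating the geometric statement ("placing $\ell$ there would put it weakly above the top of row $\ell$") into the combinatorial slack in the $(\ell-1)$-count of the reading word. The subtle point is that a column to the right of $c$ might lack an $\ell$ purely for height reasons rather than because its $\ell$-slot is occupied, and I need to confirm such a column does not secretly also lack an $(\ell-1)$ in a way that tightens the prefix inequality. Handling this will likely require separating the columns weakly right of $c$ into those with $\ge \ell$ boxes (which, being lattice and $\ell$-free would already violate something, or must contain $\ell$) and those with $\le \ell-1$ boxes, and arguing the latter contribute neither $\ell$'s nor force a problem. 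Once that case analysis is in place, the rest is the routine prefix-counting argument sketched above.
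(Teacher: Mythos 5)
The proposal shares the right high-level instincts with the paper---the only lattice pair that can be disturbed is $(\ell-1,\ell)$, and the leftmost choice of $c$ is the lever---but it misidentifies what that choice actually constrains, and the contradiction it reaches for does not follow. Choosing $c$ as the \emph{leftmost} valid column tells you something about columns strictly to the \emph{left} of $c$: each such column either already contains $\ell$ or would make a new $\ell$ too high. You repeatedly reason instead about columns to the \emph{right} of $c$ as having been ``rejected,'' which is backwards; columns right of $c$ were never tested, and it is exactly the overlap between the lattice-prefix region (weakly right of some $d\le c$) and the rejected columns (strictly left of $c$) that has to be navigated.

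More importantly, the contradiction you aim for---that equality of $\ell$- and $(\ell-1)$-counts to the right of $c$ would force column $c$ to contain $\ell$ or to be too high---does not follow. The count balance in columns weakly right of $d$ says nothing directly about which labels sit in column $c$ or where the target edge lands, and the parenthetical you offer (``the box above the target edge would carry $\ell-1$'') is not an obstruction at all: an $\ell-1$ above a new edge label $\ell$ is perfectly semistandard. The paper's proof avoids this dead end by deriving a contradiction of a different kind: a would-be violation at a column $d\le c$ forces the columns in the relevant range to each contain $\ell$ (since they cannot be too-high rejections) and then, by latticeness of $T$, also $\ell-1$, yielding $\mu_{\ell-1}=\mu_\ell$. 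This contradicts the ability to add a corner box to row $\ell$ of $\mu$, since $\mu^\uparrow$ being a partition requires $\mu_{\ell-1}>\mu_\ell$. That constraint on the shape of $\mu^\uparrow$---not merely $\mu^\uparrow\subseteq\nu$, which is what you invoke---is the ingredient your sketch is missing.
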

\begin{proof} Suppose $T^{\uparrow}$ is not lattice. So there is a column $d$ with strictly
more $\ell$'s than $(\ell-1)$'s in the region $R$ consisting of columns weakly to the right. Notice
that since $T$ is lattice and we put an additional $\ell$ in column $c$, then column $d$ must be weakly left of column $c$.

Before inserting the $\ell$, the region $R$ had an equal number of $\ell$'s and $(\ell-1)$'s. Since we could
put an $\ell$ into column $c$ and not be too high, we could put an edge label in each of the columns strictly
left of column $d$, unless they all had $\ell$'s in them. However, in that case, since $T$ is lattice, those columns
must each also contain $\ell-1$. Thus,
$\mu_{\ell-1}=\mu_{\ell}$. Hence we could not add a corner in row $\ell$ to obtain $\mu^{\uparrow}$,
a contradiction.
\end{proof}

This completes the proof of (A). \qed

\begin{Example}
We illustrate the procedure below:
\[
\begin{picture}(300,60)
\put(20,40){$T=\tableau{{\ }&{\ }&{\ }&{\ }\\{ \ }&{\ }&{2}\\{\ }}$}
\put(89,37){$1$}
\put(109,37){$1$}
\put(45,-2){$1,2$}
\put(140,27){$\mapsto$}
\put(170,40){$T^{\star}=\tableau{{\ }&{\ }&{\ }&{\ }\\{ \ }&{\ }&{2}\\{\ }}$}
\put(244,37){$1$}
\put(264,37){$1$}
\put(225,17){$2$}
\put(199,-2){$1,2$}
\end{picture}
\]
Above, $T$ witnesses $C_{(4,2,1),(3,2)}^{(4,3,1)} \neq 0$ and $T^{\star}$ witnesses $C_{(4,2,1),(3,3)}^{(4,3,1)} \neq 0$.\qed
\end{Example}

\medskip
\noindent\emph{Proof of (\ref{propB}):} As in the proof of (\ref{propA}), let $T$ be a witnessing tableau for $C_{\lambda,\mu}^{\nu}\neq 0$.  We will modify $T$ to obtain $T^{\star}$ that witnesses $C_{\lambda,\mu^{\star}}^{\nu}\neq 0$, where $\mu^{\star}\subset \mu$ and $|\mu/\mu^{\star}|=1$. ($T^{\star}$ will have one fewer edge label than $T$.)  The claim (\ref{propB}) follows by using this procedure to obtain a sequence of tableaux, each with one fewer edge label, until there are no edge labels.

\noindent {\sf Procedure to obtain $T^{\star}$:} We introduce some temporary notation.  For a word
$w$, a position $p$, and a letter $\ell$, let $N(w,p,\ell)$ be the number of occurrences of $\ell$
among the first $p$ letters of $w$.  Thus the lattice condition is that $N(w,p,\ell)\geq
N(w,p,\ell+1)$ for all $\ell$ and all $p$.

Now consider the (top to bottom, right to left) column reading word $w(T)$, and find the last
occurrence of an edge label; say this label is $\ell$, occurring in position $p^{(1)}$.  Remove
this label to obtain a new word $w^{(1)}$, and continue reading along the word, letter by letter. At
any position $q\geq p^{(1)}$, and for any letter $k\neq \ell$, we have \[N(w^{(1)},q,k) =
N(w(T),q,k) \mbox{\ \ and \ $N(w^{(1)},q,\ell)=N(w(T),q,\ell)-1$.}\]
If at some position $p^{(2)}\geq
p^{(1)}$, the lattice condition is violated in $w^{(1)}$, it must be because a letter $\ell+1$
appeared, causing
\[N(w^{(1)},p^{(2)},\ell+1)>N(w^{(1)},p^{(2)},\ell).\]
Fix this violation by
replacing this problematic $\ell+1$ with $\ell$, and call the resulting word $w^{(2)}$.  Note that
$w^{(2)}$ is lattice up to position $p^{(2)}$; moreover, for any $q \geq p^{(2)}$ and any $k\neq
\ell+1$, we have
\[
N(w^{(2)},q,k)=N(w(T),q,k) \mbox{\ \ and  \ $N(w^{(2)},q,\ell+1) = N(w(T),q,\ell+1)-1$.}
\]
Continue in this way until the end of the word is reached, and call the result $w^\star$.

By construction, $w^\star$ is a lattice word.  Furthermore, after the $\ell$ removed in the first
step, the only letters in which $w^\star$ differs from $w(T)$ correspond to box labels.  So there
is no ambiguity in how to place these entries to create a tableau $T^*$ of the same shape as $T$.

Let $\mu^\star$ be the content of $T^\star$.  The argument is completed by Lemma~\ref{l.witness} below.\qed

\begin{Lemma}\label{l.witness}
The tableau $T^\star$ witnesses $C_{\lambda,\mu^\star}^\nu \neq 0$.  In particular,
\begin{enumerate}[(i)]
\item $T^\star$ is lattice.

\item $T^\star$ is semistandard.

\item No label of $T^{\star}$ is too high.
\end{enumerate}
\end{Lemma}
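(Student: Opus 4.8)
The plan is to verify the three listed properties of $T^\star$ directly from the construction of the word $w^\star$. Property (i) is essentially immediate: the construction of $w^\star$ proceeds precisely by fixing lattice violations one at a time, each time replacing an offending $\ell+1$ (for the relevant value of $\ell$) by $\ell$ and noting that the word is lattice up to the position where the fix was made, while leaving untouched all counts of letters other than the one just decremented. So I would argue by induction on the number of fixes: after processing past position $p^{(j)}$, the word $w^{(j)}$ is lattice up to $p^{(j)}$, and the running bookkeeping $N(w^{(j)},q,k)=N(w(T),q,k)$ for $k$ outside the finite set of letters altered so far, together with $N$ for the altered letters each dropping by exactly $1$, guarantees that the only way a new violation can arise at some $q\geq p^{(j)}$ is via an $(\ell'+1)$ for the current $\ell'$, which is then fixed. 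Since each fix decrements a letter count and counts are bounded, the process terminates with a genuinely lattice word $w^\star$.

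For property (iii), the key observation is that the only edge label removed is the one at position $p^{(1)}$, and thereafter $w^\star$ differs from $w(T)$ only in box labels, each of which has been \emph{decreased} (an $\ell'+1$ replaced by $\ell'$, which only lowers the value). A box label being ``too high'' means it sits weakly above the top edge of a box in the row equal to its value; decreasing a label can only make it appear in a lower-indexed row relative to its value, hence if no label of $T$ was too high, decreasing box labels cannot create a too-high box label. The removed edge label simply disappears. One caveat to check: we must confirm the chain of replacements never tries to alter a label in a way that conflicts with the shape, i.e. the resulting $w^\star$ genuinely reassembles into a filling of $\nu/\lambda$ with the box-label positions unchanged from $T$ — this is exactly the ``no ambiguity'' remark already made in the text, which I would spell out by noting each altered position held a box label in $T$ and still does in $T^\star$.

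The main obstacle is property (ii), semistandardness of $T^\star$. Removing the final edge label cannot hurt: an edge label is constrained only by the box (if any) directly above and directly below it, and deleting it removes constraints. The real work is the chain of $\ell'+1 \leadsto \ell'$ box-label replacements: one must check that after each such replacement rows remain weakly increasing and columns remain strictly increasing. Here I expect to use that the replaced entry is at the position where the lattice violation is first detected when reading columns top-to-bottom, right-to-left — so within its column it is the topmost $(\ell'+1)$ weakly right of the violating column, and the cell above it (if present) already carries a label $\leq \ell'$ (in fact $<\ell'+1$ by strictness, and one argues it is $\leq \ell'-1$ or an $\ell'$ that would itself have been the detected violation), so lowering to $\ell'$ preserves the strict column inequality with the cell below (which is $\geq \ell'+1$ stays fine, or $=\ell'+1$ would then be the next fix) and the weak row inequality with the left neighbor. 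I would organize this as: fix a replacement step, consider the four neighbors of the altered cell, and rule out each potential violation using (a) semistandardness of the word before the step and (b) the characterization of the altered position as the \emph{first} lattice failure in reading order. This case analysis, tracking the reading order carefully, is the technical heart of the lemma; everything else is bookkeeping.
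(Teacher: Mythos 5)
Your overall approach---verifying (i), (ii), (iii) directly from the reading-word construction, with (ii) as the technical heart---matches the paper's proof, and your treatment of (i) and (iii) is essentially identical to the paper's: (i) is built into the construction, and for (iii) you correctly observe that the only changes after the initial edge removal replace box labels by strictly smaller ones, which cannot create a too-high label.

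The gap is in (ii). You correctly set up a local four-neighbor check, but the only case you reason about in any detail is the cell above, and even there the reasoning wobbles: you dismiss the possibility that the cell above is $\ell'$ on the grounds that such an $\ell'$ ``would itself have been the detected violation,'' but an $\ell'$ is never a lattice violation for the pair $(\ell',\ell'+1)$. The correct statement (Claim~\ref{claim:nosuchlabel} in the paper) is that the column of the replacement contains no $\ell'=\ell+i-1$ at all; if it did, that $\ell+i-1$ would sit directly above the $\ell+i$, and the lattice deficit would already occur one position earlier in the reading word, contradicting the minimality of $p^{(i+1)}$. More seriously, your sketch never engages with the left neighbor ${\sf d}$, which is the genuinely hard case: you simply append ``and the weak row inequality with the left neighbor'' as though it followed from the bound on the cell above, which it does not. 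Ruling out ${\sf d}=\ell+i$ requires an additional input---that $T^{(i-1)}$ is lattice with respect to the pair $(\ell+i-1,\ell+i)$, a property guaranteed by the previous step of the construction but not the same as your ingredient~(b)---and a case split on the cell ${\sf a}$ diagonally northwest of the replacement. Without that argument, the semistandardness claim is not established.
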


Before proving the lemma, we illustrate the procedure with an example.

\begin{Example}\label{Ex1}
Consider the tableau $T$:
\[
\begin{picture}(300,60)
\put(20,40){$T=\tableau{{\ }&{ \ }&{\ }&{\ }&{\ }&{1 }\\{ \ }&{\ }&{2}&{2}\\{3}&{3}}$}
\put(108,37){$1$}
\put(180,40){$T^{\star}=\tableau{{\ }&{ \ }&{\ }&{\ }&{\ }&{1 }\\{ \ }&{\ }&{1}&{2}\\{2}&{3}}$}
\end{picture}
\]
The reading word is $w(T) = 1\;{\underline 1}\;2\;2\;3\;3$, with the edge label underlined.  Removing this letter, we have a violation of the lattice condition in the third position, which is fixed by:
\[
 w^{(1)} = 1\;2\;2 \ldots \qquad \rightsquigarrow \qquad w^{(2)} = 1\;2\;1\ldots.
\]
Continuing, another violation is at the fifth (and last) position, and we fix it as before:
\[
 w^{(2)} = 1\;2\;1\;3\;3 \qquad \rightsquigarrow \qquad w^{(3)} = 1\;2\;1\;3\;2.
\]
The corresponding tableau $T^\star$ is shown above.
\qed
\end{Example}

For use in the proof of Lemma~\ref{l.witness}, it will be convenient to let $T^{(i)}$ denote the tableau corresponding to an intermediate word $w^{(i)}$.  This may not be a lattice tableau, but from the construction $T^{(i)}$ does satisfy the lattice condition with respect to labels $\ell+i$ and $\ell+i+1$.

\begin{Example}\label{Ex2}
The intermediate tableaux for Example~\ref{Ex1} are shown below.

\begin{center}
\begin{picture}(300,65)(0,-70)
\put(10,-30){$T^{(1)}=\tableau{{\ }&{ \ }&{\ }&{\ }&{\ }&{1 }\\{ \ }&{\ }&{2}&{2}\\{3}&{3}}$}

\put(180,-30){$T^{(2)}=\tableau{{\ }&{ \ }&{\ }&{\ }&{\ }&{1 }\\{ \ }&{\ }&{1}&{2}\\{3}&{3}}$}


\end{picture}
\end{center}
and fixing $T^{(2)}$ gives $T^{(3)}=T^{\star}$, which was already given above.
\qed
\end{Example}

\begin{proof}[Proof of Lemma~\ref{l.witness}]
As was already observed, (i) is true by construction.  The claim (iii) is also easy to verify: We are given that no label of $T$ is too high, and removing the initial edge label does not change this.  Moreover, each step of our procedure only changes a box label to a label that is one smaller.  If such a box label was not too high, replacing it by a smaller label will not change this either.

It remains to establish (ii), which we will do by induction on the steps of the procedure.  To obtain $T^{(i+1)}$ from $T^{(i)}$, a box label $\ell+i$ is replaced by $\ell+i-1$.  Let $c^{(i)}$ be the column where this replacement occurs (and let $c^{(0)}$ be the column of the edge label $\ell$ that was initially removed from $T$).

\begin{Claim}
\label{claim:nosuchlabel}
Suppose $i\geq 1$. In column $c^{(i)}$ of $T^{(i)}$ there is an $\ell+i$ (by assumption), but there is no $\ell+i-1$.
\end{Claim}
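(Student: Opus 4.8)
The plan is to prove Claim~\ref{claim:nosuchlabel} by analyzing the reading-word surgery that produces $T^{(i)}$ from $T$, tracking exactly which entries have been altered. Recall the procedure: we first delete an edge label $\ell$ from column $c^{(0)}$, and then, reading the column word $w(T)$ left to right past the deletion point, each time we encounter a lattice violation we change a box entry $\ell+i$ to $\ell+i-1$, in some column $c^{(i)}$ that is weakly to the right of $c^{(i-1)}$ (since we continue reading forward, hence leftward only within a column but rightward-to-left overall — here I should be careful: the reading word goes top-to-bottom, right-to-left, so "later in the word" means in an earlier-or-equal column, and within the same column lower down). I will fix orientation conventions first and state precisely: $c^{(i)}$ is weakly to the \emph{left} of $c^{(i-1)}$, and the violation at step $i$ is triggered by the presence of an $(\ell+i)$ whose count (weakly to the right of $c^{(i)}$, in $w^{(i-1)}$) has just exceeded the count of $(\ell+i-1)$'s.

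The key step is this: at the moment the violation is detected at column $c^{(i)}$, the tableau $T^{(i-1)}$ restricted to columns weakly right of $c^{(i)}$ already satisfies the lattice condition for the pair $(\ell+i-1,\ell+i)$ \emph{up to, but not including}, the offending box; this is because $w^{(i-1)}$ is lattice for this pair up to position $p^{(i)}$ by the inductive description of the procedure, and the box we are about to change is the first place this fails. In particular, just before the change, the number of $(\ell+i-1)$'s weakly right of column $c^{(i)}$ equals the number of $(\ell+i)$'s weakly right of column $c^{(i)}$ but \emph{not counting} the triggering box. Now I use semistandardness of $T^{(i-1)}$ together with the fact (established inductively, as part of (ii) for the previous step) that $T^{(i-1)}$ is semistandard: in column $c^{(i)}$, the entry $\ell+i$ sits strictly below any $\ell+i-1$ in that column. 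Suppose for contradiction that column $c^{(i)}$ of $T^{(i)}$ — equivalently $T^{(i-1)}$, since the change happens here — contains an $\ell+i-1$. Then in $w^{(i-1)}$, reading down column $c^{(i)}$, the $\ell+i-1$ is read before the triggering $\ell+i$. But the lattice count for $(\ell+i-1,\ell+i)$ was tight at the start of column $c^{(i)}$ (equality of counts weakly to the right), so encountering $\ell+i-1$ first would push its count one above, and then the $\ell+i$ only brings it back to equality — contradicting that the $\ell+i$ triggers a violation. Hence no $\ell+i-1$ occurs in column $c^{(i)}$ of $T^{(i)}$.

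The main obstacle I expect is bookkeeping the "weakly right of column $c$" counts versus "first $p$ positions of the reading word" counts, and making sure the tightness-of-count assertion at the top of column $c^{(i)}$ is correctly inherited from the inductive structure of the procedure (the claim just before Example~\ref{Ex2} that $T^{(i-1)}$ satisfies the lattice condition for labels $\ell+i-1,\ell+i$). I would state a short sub-observation isolating exactly that: \emph{when the procedure detects its violation in column $c^{(i)}$, the $(\ell+i-1)$- and $(\ell+i)$-counts strictly right of $c^{(i)}$ are equal, and the violation is caused by an $\ell+i$ inside column $c^{(i)}$ below all entries of that column equal to $\ell+i-1$.} Everything else — semistandardness of $T^{(i-1)}$, column-strictness forcing the ordering within $c^{(i)}$ — is routine and already available from the inductive hypothesis on part (ii) of Lemma~\ref{l.witness}. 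With the sub-observation in hand, the contradiction is immediate, and the claim follows.
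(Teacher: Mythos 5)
Your argument is the same as the paper's: the paper's one-line proof also derives a contradiction from column-strictness (so an $\ell+i-1$ would sit above the offending $\ell+i$ in column $c^{(i)}$, hence earlier in the reading word) plus the minimality of the position at which the replacement is made. So the core idea is right.

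Two issues should be fixed before this is watertight. First, there is a consistent off-by-one in the superscripts: with the paper's conventions, the violation fixed at column $c^{(i)}$ (an $\ell+i$ becoming $\ell+i-1$) is detected in $w^{(i)}$ at position $p^{(i+1)}$, not in $w^{(i-1)}$ at position $p^{(i)}$. The relevant minimality is that $w^{(i)}$ is lattice at every position $\leq p^{(i+1)}-1$; the fact that $T^{(i-1)}$ is globally lattice for $(\ell+i-1,\ell+i)$ is about a different word whose $(\ell+i-1)$-count differs from that of $w^{(i)}$ by one in the relevant range, so it does not directly give what you need. Second, your ``sub-observation'' that the $(\ell+i-1)$- and $(\ell+i)$-counts are already equal strictly to the right of column $c^{(i)}$ is not available before the claim is proved: if there \emph{were} an $\ell+i-1$ in column $c^{(i)}$ above the offender, those counts would in fact be unequal (indeed already in violation), and the natural proof of the sub-observation invokes the very absence you are trying to establish. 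The cleaner route (which is the paper's) is to observe directly that the counts are equal at position $p^{(i+1)}-1$, so an $\ell+i-1$ at an earlier position $p'$ in the same column would force $N(w^{(i)},p'-1,\ell+i-1) < N(w^{(i)},p'-1,\ell+i)$, an earlier violation in $w^{(i)}$, contradicting that $p^{(i+1)}$ was chosen as the first one. With these corrections your write-up matches the paper's proof.
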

\begin{proof}
Were this not the case, the lattice condition would be violated by a label $\ell+i$ occurring earlier in the reading word $w^{(i)}$, but by construction we chose the first violation.
\end{proof}

Now assume $T^{(i)}$ is semistandard.  In the following argument, it will help to refer to a diagram illustrating the replacement taking $T^{(i)}$ to $T^{(i+1)}$, locally:
\[T^{(i)}=\begin{tabular}{ |l | c | r | }
\hline
  $\sf a$ & $\sf b$ & $\sf c$ \\
  \hline
  $\sf d$ & $\ell+i$ & $\sf e$ \\
  \hline
  $\sf f$ & $\sf g$ & $\sf h$ \\
\hline
\end{tabular}
\to
\begin{tabular}{ |l | c | r | }
\hline
  $\sf a$ & $\sf b$ & $\sf c$ \\
  \hline
  $\sf d$ & $\ell+i-1$ & $\sf e$ \\
  \hline
  $\sf f$ & $\sf g$ & $\sf h$ \\
\hline
\end{tabular}
=T^{(i+1)}.\]
(If the second column in the above local diagram is $c^{(0)}$
there could be edge labels above the $\sf b$ or in the columns to the right.  However, these labels do not affect the argument.)

\begin{Claim}\label{ssyt}
$T^{(i+1)}$ is semistandard.
\end{Claim}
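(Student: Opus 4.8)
The goal is to verify that replacing the box label $\ell+i$ by $\ell+i-1$ in column $c^{(i)}$ preserves semistandardness, given that $T^{(i)}$ is semistandard and (by Claim~\ref{claim:nosuchlabel}) column $c^{(i)}$ of $T^{(i)}$ contains no $\ell+i-1$. Semistandardness is a purely local condition, so I only need to check the relations between the changed entry and its four orthogonal neighbors, in the notation of the local diagram: the entry above ($\sf b$), below ($\sf g$), to the left ($\sf d$), and to the right ($\sf e$). Decreasing an entry can only break (a) weak increase along the row at the \emph{left} neighbor $\sf d$, or (b) strict increase down the column at the \emph{top} neighbor $\sf b$; it cannot break the inequality with $\sf e$ to the right (we only made the entry smaller, and $\ell+i-1 < \ell+i \leq \sf e$ when $\sf e$ exists, wait—need to be careful, but $\sf e \geq \ell+i$ by the old row condition, so $\ell+i-1 \leq \sf e$ still holds) or with $\sf g$ below (again $\sf g > \ell+i > \ell+i-1$).

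\textbf{Checking the column (top neighbor $\sf b$).} I must show $\sf b < \ell+i-1$, i.e. $\sf b \leq \ell+i-2$. Suppose instead $\sf b = \ell+i-1$. But $\sf b$ sits in column $c^{(i)}$, and Claim~\ref{claim:nosuchlabel} says column $c^{(i)}$ of $T^{(i)}$ contains no $\ell+i-1$ — contradiction. So $\sf b \leq \ell+i-2$ (using also that $\sf b \neq \ell + i$ by strict column-increase in $T^{(i)}$, but actually $\sf b < \ell+i$ already forces $\sf b \leq \ell+i-1$, and ruling out equality gives $\sf b \leq \ell+i-2$), and the column condition $\sf b < \ell+i-1$ holds. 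If $\sf b$ is an edge label (possible only when the column is $c^{(0)}$), the same reasoning applies verbatim, since an edge label $\ell+i-1$ in column $c^{(0)}$ is equally forbidden by the lattice-violation choice that produced $w^{(i)}$; here one appeals to the fact noted in the excerpt that edge labels above $\sf b$ "do not affect the argument."

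\textbf{Checking the row (left neighbor $\sf d$).} I must show $\sf d \leq \ell+i-1$. From semistandardness of $T^{(i)}$ we only know $\sf d \leq \ell+i$. The case $\sf d \leq \ell+i-1$ is fine, so suppose $\sf d = \ell+i$. Then column $c^{(i)}-1$ contains an $\ell+i$ in the same row. I claim this is impossible: the entry being modified at step $i$ is the $\ell+i$ whose position causes the \emph{first} lattice violation between $\ell+i-1$ and $\ell+i$ in $w^{(i)}$, reading top-to-bottom, right-to-left; but an $\ell+i$ in column $c^{(i)}-1$ in the same row occurs \emph{later} in the reading word, so it cannot be the one that triggered the violation — unless the violation is already present at our $\ell+i$, which is exactly the one we are fixing. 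Hence $\sf d < \ell+i$, giving $\sf d \leq \ell+i-1$, and the row condition holds. (One should double-check the degenerate possibility that the violation occurs at the very position of an $\ell+i$ sitting directly left of another $\ell+i$ in the same row; but then both are in the same row and the leftmost one is the first violator, so $c^{(i)}$ is that leftmost column and there is no $\ell+i$ to its left in that row.)

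\textbf{Main obstacle.} The routine part is the four local inequalities; the one that requires genuine care is the row condition with the left neighbor $\sf d$, because unlike the column case it is not handed to us by Claim~\ref{claim:nosuchlabel} but must be extracted from the "first violation" choice built into the construction of $w^{(i+1)}$ from $w^{(i)}$. I would phrase this carefully in terms of the reading-word position of the replaced letter, using that the reading word visits column $c^{(i)}-1$ strictly after column $c^{(i)}$ at the same height, so any equal-valued left neighbor would give an earlier violator, contradicting minimality. Once that is pinned down, combining it with the (easy) column case and the trivial observations about the right and bottom neighbors completes the induction step and hence the proof of Claim~\ref{ssyt}, and with it parts (i)--(iii) of Lemma~\ref{l.witness}.
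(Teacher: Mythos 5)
The checks against $\sf e$, $\sf g$, and $\sf b$ in your argument are correct and agree with the paper; the genuine work, as you note, is showing $\sf d \leq \ell+i-1$, and this is where your argument has a gap. Assuming $\sf d = \ell+i$, you observe that $\sf d$ comes later in the reading word than the $\ell+i$ at $c^{(i)}$, and conclude it ``cannot be the one that triggered the violation.'' But that is not a contradiction: of course the left neighbor is not the first violator --- the entry we are replacing is. Nothing rules out a second $\ell+i$ occurring after the first violation; indeed, fixing the first violation (raising the $\ell+i-1$ count by one) can legitimately absorb a subsequent $\ell+i$, so the existence of $\sf d=\ell+i$ downstream is not incompatible with $c^{(i)}$ being the first violator. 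Your parenthetical compounds the confusion: reading right to left, the \emph{rightmost} of two same-row $\ell+i$'s is encountered first, not the leftmost, so ``the leftmost one is the first violator'' is backwards.

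The paper closes this gap with two ingredients absent from your sketch. First, it splits on the diagonal entry $\sf a$ above $\sf d$: if $\sf a = \ell+i-1$, semistandardness of $T^{(i)}$ forces $\sf b \geq \sf a$ and $\sf b < \ell+i$, hence $\sf b = \ell+i-1$, contradicting Claim~\ref{claim:nosuchlabel}. Second, if $\sf a < \ell+i-1$ (or does not exist), it invokes the inductive invariant stated just before the proof --- that $T^{(i-1)}$ is lattice with respect to $\ell+i-1$ and $\ell+i$ --- and shows that $\sf d = \ell+i$ would force a lattice violation at $\sf d$'s position in $T^{(i-1)}$: between the replaced entry at $c^{(i)}$ and $\sf d$, no new $\ell+i-1$ appears (using Claim~\ref{claim:nosuchlabel} in column $c^{(i)}$ and the bound on $\sf a$ in column $c^{(i)}-1$), $\sf d$ contributes one more $\ell+i$, and the passage from $T^{(i-1)}$ to $T^{(i)}$ removed exactly one $\ell+i-1$ from an earlier reading-word position. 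The ``first violation'' idea in your write-up is reaching for this bookkeeping but does not actually carry it out, and without the case split on $\sf a$ and the appeal to $T^{(i-1)}$'s partial latticeness, the conclusion $\sf d < \ell+i$ is unsupported.
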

\begin{proof}
Clearly we have $\ell+i-1\leq {\sf e}$ and $\ell+i-1<{\sf g}$. That ${\sf b}<\ell+i-1$ is clear from the
semistandardness of $T^{(i)}$ combined with Claim~\ref{claim:nosuchlabel}.  It remains to show ${\sf d}\leq \ell+i-1$.

We have ${\sf d}\leq \ell+i$, so in order to reach a contradiction,
let us assume ${\sf d}=\ell+i$.  There are two cases.
First, if ${\sf a}=\ell+i-1$,
then by the semistandardness of $T^{(i)}$ we have ${\sf b}=\ell+i-1$, contradicting Claim~\ref{claim:nosuchlabel}.
Second, if ${\sf a}<\ell+i-1$ (or if $a$ does not exist), then it follows from Claim~\ref{claim:nosuchlabel} that in $T^{(i-1)}$, the label ${\sf d}=\ell+i$
witnesses that $T^{(i-1)}$ is not lattice with respect to $\ell+i-1$ and $\ell+i$, a contradiction since we know $T^{(i-1)}$ is lattice with respect to these labels.  Hence ${\sf d}<\ell+i$, and $T^{(i+1)}$ is semistandard.
\end{proof}

This completes the proof of the lemma.
\end{proof}

\section*{Acknowledgements}
The authors are grateful to the organizers of the MSJ-SI 2012 Schubert calculus conference in Osaka, Japan, where this collaboration was initiated.
AY thanks Hugh Thomas for their earlier work on \cite{Thomas.Yong} and the insights he shared during that process.
AY was partially supported by an NSF grants DMS-0901331 and DMS-1201595, a Beckman Fellowship from UIUC's Center for Advanced Study, and support from the Helen Corley Petit endowment, also at UIUC.  ER was partially supported by the Natural Sciences and Engineering Research Council of Canada.




\begin{thebibliography}{9999999}

\bibitem[An07]{Anderson} D.~Anderson, \emph{Positivity in the cohomology of flag bundles (after Graham)}, preprint, 2008. \textsf{arXiv:0711.0983}

\bibitem[Be06]{Belkale:06} P.~Belkale, \emph{Geometric proofs of Horn and saturation conjectures}, J. Alg. Geom. {\bf 15} (2006), no. 1, 133--173.

\bibitem[Be08]{Belkale:08} \bysame, \emph{Quantum generalization of the Horn conjecture}, J. Amer. Math. Soc. {\bf 21} (2008), no.
2, 365--408.

\bibitem[BeKu06]{BK:06} P.~Belkale and S.~Kumar, \emph{Eigenvalue problem
and a new product in cohomology of flag varieties}, Invent.~Math. {\bf 166} (2006), no. 1, 185--228.

\bibitem[BeKu10]{BK:10} \bysame, \emph{Eigencone, saturation
and Horn problems for symplectic and odd orthogonal groups}, J.~Algebraic Geom. {\bf 19} (2010), 199--242.

\bibitem[Bu02]{Buch:02} A.~Buch, \emph{A Littlewood-Richardson rule for the K-theory of Grassmannians}, Acta Math. {\bf 189} (2002), no. 1, 37--78.

\bibitem[Bu06]{Buch:06} \bysame, \emph{Eigenvalues of Hermitian matrices with positive sum of bounded rank},
Linear Algebra Appl. {\bf 418} (2006), no. 2-3, 480--488.

\bibitem[Ch06]{Chindris} C.~Chindris, \emph{Eigenvalues of Hermitian matrices and cones arising from quivers}, Inter.~Math.~Res.~Notices, 2006, Art. ID 59457, 27 pages.

\bibitem[DeWe00]{Derksen.Weyman} H.~Derksen, J.~Weyman, \emph{Semi-invariants of quivers and saturation for Littlewood-Richardson coefficients}, J.~Amer.~Math.~Soc. {\bf 13} (2000), no.~3, 456--479.

\bibitem[Fr00]{Friedland} S.~Friedland, \emph{Finite and infinite dimensional generalizations of Klyachko's theorem}, Linear Algebra Appl., {\bf 319} (2000), 3--22.

\bibitem[Fu00a]{Fulton} W.~Fulton, \emph{Eigenvalues of majorized Hermitian matrices and Littlewood-Richardson coefficients},
Linear Algebra Appl. {\bf 319} (2000), no. 1--3, 23--36.

\bibitem[Fu00b]{Fultona} \bysame, \emph{Eigenvalues, invariant factors, highest weights, and Schubert calculus},
Bull. Amer. Math. Soc. (N.S.) {\bf 37} (2000), no. 3, 209--249 (electronic).

\bibitem[Fu07]{Fultoneq} \bysame, \emph{Equivariant cohomology in algebraic geometry},
  lectures at Columbia University, notes by D.\ Anderson, 2007.
  \textsf{http:/$\!$/www.math.washington.edu/$\sim$dandersn/eilenberg}

\bibitem[Ho62]{Horn} A.~Horn, \emph{Eigenvalues of sums of Hermitian matrices}, Pacific J.~Math.,
{\bf 12} (1962), 225--241.

\bibitem[KaMi08]{Kapovich.Millson} M.~Kapovich and J.~J.~Millson, \emph{A path model for geodesics in Euclidian buildings and its applications to representation theory}, Groups Geom. Dyn. {\bf 2} (2008), no.~3, 405--480.

\bibitem[Kl98]{Klyachko} A.~A.~Klyachko, \emph{Stable vector bundles and Hermitian operators},
Selecta Math. (N.S.) {\bf 4} (1998), 419--445.

\bibitem[KnTa99]{Knutson.Tao:99} A.~Knutson and T.~Tao, \emph{The honeycomb model of $GL_n({\mathbb C})$ tensor products I: proof of the saturation conjecture}, J.~Amer.~Math.~Soc. {\bf 12} (1999), 1055--1090.

\bibitem[KnTa03]{Knutson.Tao:03} \bysame, \emph{Puzzles and (equivariant) cohomology of Grassmannians},  Duke Math. J. {\bf 119} (2003), no. 2, 221--260.

\bibitem[Ku10]{Kumar:ICM} S.~Kumar, \emph{Tensor product decomposition}, International Congress of Mathematicians, Hyderabad, India, 2010.

\bibitem[MoSa99]{Molev.Sagan:99} A.~Molev and B.~Sagan, \emph{A Littlewood-Richardson rule for factorial Schur functions}, Trans. Amer. Math. Soc. {\bf 351} (1999), no. 11, 4429--4443.

\bibitem[PuSo09]{Purbhoo.Sottile} K.~Purbhoo and F.~Sottile,
\emph{The recursive nature of cominuscule Schubert calculus}, Adv.~Math., {\bf 217}(2008), 1962--2004.

\bibitem[Res10]{Ressayre} N.~Ressayre, \emph{Geometric invariant theory and the generalized eigenvalue problem}, Invent.~Math. {\bf 180} (2010), no. 2, 389--441.

\bibitem[Sa12]{Sam} S.~Sam, \emph{Symmetric quivers, invariant theory, and saturation theorems for the classical groups}, Adv. Math. 229 (2012), no. 2, 1104--1135.

\bibitem[ThYo10]{Thomas.Yong:direct} H.~Thomas and A.~Yong, \emph{The direct sum map on Grassmannians and jeu de taquin for increasing tableaux},
Int.~Math.~Res.~Notices (2011) Vol. 2011, 2766--2793.

\bibitem[ThYo12]{Thomas.Yong} \bysame, \emph{Equivariant Schubert calculus and jeu de taquin}, preprint, 2012. \textsf{arXiv:1207.3209}

\bibitem[To94]{Totaro} B.~Totaro, \emph{Tensor products of semistables are semistable}, in Geometry and Analysis on Complex Manifolds, Festschrift for Professor S.~Kobayashi's 60th Birthday, ed. T.~Noguchi, J.~Noguchi, and T.~Ochiai, World Scientific Publ.~Co., Singapore, 1994,  242--250.

\end{thebibliography}
\end{document}